\long
\def\salta#1{\relax}
\newcommand{\R}{{I\!\!R}}
\newcommand{\N}{{I\!\!N}}
\newcommand{\car}{{\raise2pt\hbox{$\chi$}}}
\newcommand{\dob}{{\mathcal D}^{1,2}}
\renewcommand{\t }{\tau }
\newtheorem{Theorem}{Theorem}[section]
\newtheorem{Corollary}[Theorem]{Corollary}
\newtheorem{Lemma}[Theorem]{Lemma}
\newcommand{\B}{\mathcal{B}}
\newcommand{\C}{\mathcal{C}}
 \newcommand{\NN}{\mathcal{N}}
\newcommand{\F}{\mathcal{F}}
 \newcommand{\W}{\mathcal{W}}
\begin{document}
\title[]{Blow-up analysis for a Hardy-Sobolev equation on compact Riemannian manifolds with application to the existence of  solutions.}
\author[Y. Maliki, F.Z. Terki]{Y. Maliki$^*$ and F.Z. Terki}
\address{Y. Maliki, F.Z. Terki \hfill \break\indent D\'epartement de
Math\'ematiques, Universit\'e Abou Bakr Belkaïd, Tlemcen, \hfill\break%
\indent Tlemcen 13000, Algeria.} \email{\texttt{malyouc@yahoo.fr,
fatimazohra113@yahoo.fr}}

\date{}
\maketitle

\begin{abstract} On a compact Riemannian manifold, we study a singular elliptic equation with  critical Sobolev exponent and critical  Hardy potential. In a first part, we prove an  $H^2_1$ type decomposition result for Palais-Smale sequences of the associated energy functional. In a second part, we apply the decomposition result to obtain solutions of different energy levels.
\end{abstract}
\section{Introduction}
Let $(M,g$) be a compact $(n\geq 3)-$dimensional Riemannian manifold. Denote by $\delta_g>0$ its injectivity radius. For a fixed point $ p\in M$, define (after \cite{Madani}) on $M$ a function $\rho_p$  as follows
\begin{equation*}
    \rho_p(x)=\left\{
       \begin{array}{ll}
         dist_g(p,x), &x\in B(p,\delta_g),\\
         \delta_g, &x\in M\setminus B(p,\delta_g)
       \end{array}
     \right.
\end{equation*}
Let $h$ be a continuous functions on $M\setminus\{p\}$ and consider the following Hardy-Sobolev equation:
\begin{equation} \label{0.1}
\Delta _{g}u-\frac{h}{\rho_p ^{2}}u=|u|^{2^*-2}u,
\tag{$E$}
\end{equation}
where $\Delta_g=-\text{div}(\nabla_g u)$ is the Laplacian operator on the manifold $(M,g)$ and $2^*=\frac{2n}{n-2}$ is the Sobolev critical exponent.\\
Equation \eqref{0.1}, when the Hardy potential $\frac{h}{\rho_p ^{2}}u $ replaced by $\frac{n-2}{4(n-1)}Scal_g$, is the famous Yamabe  equation arising from the conformal deformation of the metric $g$ and which has been largely studied( see \cite{Aubin} for an exposure of the main pioneering works). When  the function $\rho_p$ is of power $0<\gamma<2$, equation\eqref{0.1} appears as a case of equations that arise in the study of conformal deformation to constant scalar curvature of metrics which are smooth only in some ball $B_p(\delta)$; it is a kind of a singular  Yamabe problem that has been formulated and studied in \cite{Madani}.\\
On the Euclidean space $\R^n$, equation \eqref{0.1}, with a function $K$ involved in the right-hand side, has been studied in \cite{D. Smet}. The author obtained some existence results after having proved a result on decomposition of Palais-Smale sequences of the functional energy. About this decomposition result, the author showed that the singular term does interfere in the decomposition and gives rise to a second type of bubbles in addition, of course, to  bubbles which results from the existence of the Sobolev exponent.\\
 In this paper, we aim at extending this decomposition result to the context of compact Riemannian manifolds and equations like \eqref{0.1}. To achieve this aim, we follow the authors in \cite{Druet-hebbey-robert} in their constructions when they extended, to Yamabe type equations on compact Riemannian manifold, the Struwe's \cite{Struwe} decomposition result. More precisely, for our decomposition result we prove that Palais-Smale sequences split into the sum of a solution of equation\eqref{0.1} and bubbles which construct from solutions $u$ and $v$ on $ D^{1,2}(\R^n)$ of equations
 \begin{equation}\label{0.2}
    \Delta_{_{\R^n}} u =|u|^{\frac{4}{n-2}}u,
\end{equation}
and
\begin{equation}\label{0.3}
\Delta_{_{\R^n}}v-h(p)\frac{v}{|x|^2}=|v|^{\frac{4}{n-2}}v.
\end{equation}
As an application, we use the decomposition result to determine energy regions in which  Palais-Smale sequences are compact  and then converge, up to subsequences, to solutions of \eqref{0.1} of different energy levels.
\section{Notations and background materials}
In the following, we introduce some notations and materials that will be used throughout the paper.\\
 Denote by $D^{1.2}(\R^n)$, the Euclidean Sobolev space defined as the completion, with respect to the norm
\begin{equation*}
    ||u||_{D^{1.2}(\R^n)}^2=\int_{\R^n}|\nabla u|^2dx,
\end{equation*}
of the space $\C_o^\infty(\R^n)$ of smooth functions on $\R^n$ with compact support.\\
Let $K(n,2)$ denote the best constant in the Sobolev inequality
\begin{equation*}
    \left(\int_{\R^n}|u^{2^*}|dx\right)^{\frac{2}{2^*}}\le K^2(n,2)\int_{\R^n}|\nabla u|^2dx.
\end{equation*}
It is well known that the exact value of $K(n,2)$ is
\begin{equation*}
    K(n,2)=\sqrt{\frac{4}{n(n-2)w_n^{\frac{n}{2}}}},
\end{equation*}
where $w_n$ denotes the volume of the unit sphere in the Euclidean space $\R^{n+1}$.\\
Let $K(n,2,-2)=\frac{2}{(n-2)}$ denote  the best constant in the Hardy inequality
on $D^{1.2}(\R^n)$,
\begin{equation*}
 \int_{\R^n}\frac{u^2}{|x|^2}dx\le K^2(n,2,-2)\int_{\R^n}|\nabla u|^2dx.
\end{equation*}
Let  $0<\lambda< \frac{1}{K(n,2,-2)^{2}}$ and consider on $D^{1.2}(\R^n)$ the equation
\begin{equation}\label{2.2}
\Delta_{_{\R^n}}u-\lambda\frac{u}{|x|^2}=|u|^{\frac{4}{n-2}}u.
\end{equation}
By a classification result in \cite{Terracini}, positive solutions of \eqref{2.2} are the family of functions
\begin{equation}\label{1.2'}
    U_\mu(x)=\mu^{\frac{2-n}{2}}U\left(\frac{x}{\mu}\right),\mu>0
\end{equation}
where
\begin{equation*}
U(x)=(n(n-2))^{\frac{n-2}{4}}\left(\frac{a\left\vert x\right\vert ^{a-1}}{%
1+\left\vert x\right\vert ^{2a}}\right)^{\frac{n}{2}-1}, x\in\R^n
\end{equation*}
 and
\begin{equation}\label{1.2}
a=\sqrt{1-\lambda K(n,2,-2)^2}.
\end{equation}
Moreover, the family of functions $U_\lambda(x) $ satisfies
\begin{eqnarray}\label{0.4}
 \nonumber \inf_{u\in D^{1,2}(\R^n)\setminus\{0\}}\frac{\int_{\R^n}\left(|\nabla u|^2-\lambda\frac{u^2}{|x|^2}\right)dx}{(\int_{\R^n}|u|^{2^*}dx)^{\frac{2}{2^*}}}&=&\frac{\int_{\R^n}\left(|\nabla U_\lambda|^2-\lambda\frac{U_\lambda^2}{|x|^2}\right)dx}{(\int_{\R^n}|U_\lambda|^{2^*}dx)^{\frac{2}{2^*}}}
\\&=&\frac{(1-\lambda K(n,2,-2)^2)^{\frac{n-1}{n}}}{K(n,2)^2},
\end{eqnarray}
On the compact Riemannian manifold $M$, we consider  the Sobolev space $H_{1}^{2}(M)$ consisting of
the completion of $\C^{\infty}(M)$ with respect to the norm
$$||u||^2_{H_{1}^{2}(M)}=\int_M(|\nabla u|^2+u^2)dv_g.$$
By the Rellich-Kondrakov theorem ( see \cite{Hebey} ), if $M$ is compact and $q<2^*=\frac{2n}{n-2}$, the inclusion $H_{1}^{2}(M) \subset L_q(M)$ is compact. If $q=2^*$, the inclusion is only continuous. On the Sobolev space $H_{1}^{2}(M)$, the following optimal Sobolev inequality holds ( see \cite{Hebey2} , Theorem 4.6)). For any $u\in H_{1}^{2}(M)$, there exists a positive constant $B$ such that
\begin{equation}\label{1.1}
 ||u||^2_{L_{2^*}(M)}\le K^2(n,2)||\nabla u||^2_{L_2(M)}+B||u||^2_{L_2(M)}.
\end{equation}
We denote by $L_2(M,\rho_p^2)$ the space of functions on $M$ such that $\int_M\frac{u^2}{\rho_p^2}<\infty$. This space is endowed with the norm
\begin{equation*}
    ||u||_{L_2(M,\rho_p^2)}^2=\int_M\frac{u^2}{\rho_p^2}dv_g.
\end{equation*}
In \cite{Madani}, it is shown that the Sobolev space $H^2_1(M)$is continuously embedded in $L_2(M,\rho_p^2)$ and the following Hardy inequality  on $H^2_1(M)$ holds: for every $\varepsilon>0$ there exists a
positive constant $A(\varepsilon)$ such that for any $u\in
H^2_1(M)$,
\begin{equation}\label{2.1}
\int_M \frac{u^2}{\rho^2_p}dv_g\le(
K^2(n,2,-2)+\varepsilon)\int_M|\nabla
u|^2dv_g+A(\varepsilon)\int_Mu^2dv_g,
\end{equation}
If $u$ is supported in a ball $B(p,\delta),0<\delta<\delta_g$,  then
\begin{equation*}
\int_{B(p,\delta)} \frac{u^2}{\rho^2_p}dv_g\le
K_\delta(n,2,-2)\int_{B(p,\delta)}|\nabla u|^2dv_g,
\end{equation*}
with $ K_\delta(n,2,-2)$ goes to $K(n,2,-2)$ when $\delta$ goes to
$0$. \\
In the paper, we will denote by $B(a,r)$ a ball of center
$a$ and radius $r>0$, the point $a$ will be specified either in $M$
or in $\R^n$, and $B(r)$ is a ball in $\R^n$ of center $0$ and
radius $r>0$.\\
Finally, we denote by $\eta_\delta$, where $\delta>0$, a cut-off function that satisfies $\eta_\delta(x)=1, x\in B(a,\delta), 0\le\eta_\delta\le1, x\in B(a,2\delta) $, $\eta_\delta(x)=0, x\in\R^n\setminus B(a,\delta)$ and $|\nabla\eta_\delta|\le C, x\in B(a,2\delta)$.\\
\section{Decomposition of Palais-Smale sequences}
Let $J_h$ be the functional defined on $H_1^2(M)$ by
\begin{equation*}
J_h(u)=\frac{1}{2}\int_M(|\nabla
u|^2-\frac{h}{\rho^2}
u^2)dv_g-\frac{1}{2^*}\int_M|u|^{2^*}dv_g.
\end{equation*}
A Palais-Smale sequence $u_m$ of $J_h $ at a level
$\beta$ is defined to be the sequence that satisfies $J_h(u_m)\to
\beta $ and $DJ_h(u_m)\varphi\to 0,\forall\varphi \in H^2_1(M)$.\\
In this section,  we prove  an $H^2_1-$ type decomposition theorem for Palais-Smale sequences for the functional $J_h$. We follow closely  the blow-up theory given in \cite{Druet-hebbey-robert} where the authors establish a decomposition result for a regular elliptic equation on compact manifolds and prove that a sequence of solutions of this equation decomposes into the sum of a solution $u_o$  of a limiting equation and bubbles which are solutions of equation\eqref{0.2}. The energy of this sequence decomposes, in the same manner,  into the sum of the energy of $u_o$ and the energy of bubbles. This result is known as the $H_1^2-$Decomposition for Palais-Smale sequences. In our case, the singular term interferes in the decomposition process and appeals to an analysis near the singular point $p$ to be done. Inspired by a decomposition result  in \cite{D. Smet}, we show that two kinds of bubbles contribute in the decomposition of Palais-Smale sequences. Note that in \cite{Maliki}, we proved, by the same techniques, a decomposition result for an arbitrarily bounded energy sequence of solution of equation \eqref{0.1}.\\
Before we formulate our decomposition theorem, we introduce on $D^{1,2}(\R^n)$ the   functionals
\begin{eqnarray*}
J(u)&=& \frac{1}{2}\int_{\R^n}|\nabla
u|^2dx-\frac{1}{2^*}\int_{\R^n}|u|^{2^*}dx, \text{ and } \\
 J_\infty (u)&=& \frac{1}{2}\int_{\R^n}|\nabla
u|^2dx- \frac{h(p)}{2}\int_{\R^n}\frac{u^2}{|x|^2}
dx-\frac{1}{2^*}\int_{\R^n}|u|^{2^*}dx.
\end{eqnarray*}
Now, we state the following decomposition  theorem:
\begin{Theorem}\label{thm3.6} Let $(M,g)$ be a compact Riemannian manifold with
$dim(M)=n\ge3$ and let $h$ be a continuous function on $M$ that  on the point $p\in M$, it satisfies $0<h(p)<\frac{1}{K(n,2,-2)^2}$.\\
Let $u_m$ be a Plais-Smale sequence of the functional $J_h$ at level $\beta$. Then, there
exist $k \in \N$, sequences $ R_m^i>0,R_m^i\underset{m\to\infty}{\to}0$, $\ell\in\N^n$
sequences $\t_m^j>0,\t_m^j\underset{m\to\infty}{\to}0$,
converging sequences  $x_m^j\to x_o^j\neq p$ in $M$, a solution
$u\in H^2_1(M)$ of \eqref{0.1},  solutions $v_i\in D^{1,2}(\R^n)$
 of \eqref{0.2} and nontrivial solutions $\nu_j\in
D^{1,2}(\R^n)$ of \eqref{0.3} such that up to a subsequence
\begin{eqnarray*}
u_m&=&u+\sum_{i=1}^{k}(R^i_m)^{\frac{2-n}{n}}
\eta_r(\exp^{-1}_p(x))v_i((R_m^i)^{-1}\exp^{-1}_p(x))\\&+&\sum_{j=1}^{\ell}(\tau^i_m)^{\frac{2-n}{n}}
\eta_r(\exp^{-1}_{x_m^j}(x))\nu_j((\tau_m^j)^{-1}\exp^{-1}_{x_m^j}(x))+\W_m,\\&&
\text{ with } \W_m\to 0 \text{ in }H^1_2(M),
\end{eqnarray*}
and
\begin{equation*}
J_h(u_m)=J_h(u)+\sum_{i=1}^k
J_\infty(v_i)+\sum_{j=1}^l J(\nu_j)+o(1).
\end{equation*}
\end{Theorem}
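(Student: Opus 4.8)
The plan is to run the standard Struwe-type induction on the energy, following the Druet–Hebey–Robert scheme but bookkeeping both the ``Sobolev'' bubbles (rescalings around $p$ in the notation of \eqref{0.2}) and the ``Hardy'' bubbles (rescalings around points $x_m^j \to x_o^j \ne p$ in the notation of \eqref{0.3}); wait, I must be careful here: the singular point $p$ is precisely where the Hardy potential $h/\rho_p^2$ concentrates, so the bubbles \emph{at} $p$ must be solutions of the Hardy-type equation \eqref{0.3} (these are the $v_i$, with energy measured by $J_\infty$), while bubbles forming away from $p$ see no singularity and must solve \eqref{0.2} (these are the $\nu_j$, with energy $J$). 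First I would establish that a Palais–Smale sequence $u_m$ is bounded in $H^2_1(M)$: combine $J_h(u_m) \to \beta$ and $DJ_h(u_m)u_m \to 0$ in the usual way, using the Hardy inequality \eqref{2.1} with $\varepsilon$ small enough that $(K(n,2,-2)^2+\varepsilon)\|h\|_\infty < 1$ near $p$ and continuity of $h$ to absorb the singular term; here the hypothesis $0 < h(p) < 1/K(n,2,-2)^2$ is exactly what makes the quadratic form coercive. Extracting a weak limit $u \in H^2_1(M)$, one checks $u$ is a (possibly trivial) solution of \eqref{0.1}, and sets $u_m^1 := u_m - u$, which is again Palais–Smale for $J_h$ — or rather for the translated functional — at level $\beta - J_h(u)$, and converges weakly to $0$.

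The core of the argument is the concentration step. If $u_m^1 \to 0$ strongly we are done with $k=\ell=0$; otherwise $\limsup_m \|u_m^1\|_{L_{2^*}} > 0$, and by the optimal Sobolev inequality \eqref{1.1} together with $DJ_h(u_m^1)u_m^1 \to 0$ one gets a lower bound $\beta - J_h(u) \ge c_0 > 0$ forcing concentration at a point. One then distinguishes two cases according to whether the concentration point is $p$ or not. If it is $p$, I would define the scale $R_m^1$ by a normalization of the local energy inside geodesic balls $B(p, R)$ (e.g. the first radius at which $\int_{B(p,R)}|u_m^1|^{2^*} = $ a fixed fraction of the total), rescale via $\tilde v_m(y) := (R_m^1)^{(n-2)/2} u_m^1(\exp_p(R_m^1 y))$, pull back the metric $g$ (which converges to the Euclidean metric in $C^2_{loc}$ after rescaling), and pass to the limit: because $\rho_p(\exp_p(R_m^1 y)) \sim R_m^1 |y|$ near $p$, the rescaled Hardy term converges to $h(p)\,v^2/|x|^2$, so $\tilde v_m \weakly v_1$ with $v_1$ a nontrivial solution of \eqref{0.3}, whose existence and classification is guaranteed by \cite{Terracini} and formula \eqref{1.2'}. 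If the concentration point is some $x_o^1 \ne p$, the Hardy term is smooth and bounded there, the rescaling around $x_m^j \to x_o^1$ kills it in the limit, and one extracts a nontrivial solution $\nu_1$ of \eqref{0.2}. In either case one subtracts the corresponding bubble (cut off by $\eta_r$) from $u_m^1$ to produce $u_m^2$, verifies it is still Palais–Smale at the energy reduced by exactly $J_\infty(v_1)$ or $J(\nu_1)$ — this is the energy-additivity computation, using that the interaction/cross terms between the bubble and the remainder are $o(1)$ in $H^2_1$ and in $L_{2^*}$, which is where the $\eta_r$ cut-off and the vanishing of scales $R_m^i, \tau_m^j \to 0$ enter — and iterates.

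Termination of the iteration is the standard quantization argument: each extracted bubble carries energy bounded below by the infimum in \eqref{0.4} (for Hardy bubbles, with $\lambda = h(p)$) or by the Yamabe-type threshold $\tfrac{1}{n}K(n,2)^{-n}$ (for Sobolev bubbles), both strictly positive, so after finitely many steps, say $k + \ell$ of them, the remainder $\W_m := u_m^{k+\ell+1}$ converges strongly to $0$ in $H^2_1(M)$; reassembling gives the stated decomposition of $u_m$ and, summing the energy identities, the decomposition of $J_h(u_m)$. The main obstacle I expect is the concentration analysis precisely \emph{at} $p$: one must show that the rescaled sequence does not itself develop a further, deeper bubble at scale $o(R_m^1)$ concentrated at the origin (which would be invisible in the weak limit), and that the pulled-back singular potential $\rho_p^{-2}$ converges in the right topology to $|x|^{-2}$ uniformly on annuli $\{\varepsilon \le |y| \le \varepsilon^{-1}\}$ while its contribution on the tiny ball $\{|y| \le \varepsilon\}$ is controlled by the local Hardy constant $K_\delta(n,2,-2) \to K(n,2,-2)$; keeping the two families of scales and centers separated (so that bubbles at $p$ and bubbles away from $p$ do not collide or telescope) is the delicate combinatorial/analytic point, handled exactly as the mutual-interaction estimates in \cite{Druet-hebbey-robert} adapted with the extra Hardy term as in \cite{D. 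Smet}.
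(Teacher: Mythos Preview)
Your proposal is correct and follows the same Struwe--Druet--Hebey--Robert iteration as the paper, with the same identification of Hardy bubbles at $p$ (solving \eqref{0.3}, energy $J_\infty$) versus Sobolev bubbles away from $p$ (solving \eqref{0.2}, energy $J$), and the same energy-quantization termination.

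The one organizational difference worth flagging: you propose to first locate an abstract concentration point and then case on whether it equals $p$, whereas the paper never does this. At each step the paper \emph{always} rescales at $p$ first (Lemma~\ref{lem3.3}, with the scale $R_m$ fixed by $\int_{B(p,r_m)}|\nabla v_m|^2=\gamma$), and only when that rescaled sequence $\hat v_m$ has weak limit $0$ does it pass to a max-over-centers concentration function $\F(t)=\max_{x\in M}\int_{B(x,t)}|\nabla v_m|^2$ to find $x_m\to x_o$ and extract a Sobolev bubble (Lemma~\ref{lem3.5}); crucially, the vanishing of $\hat v_m$ near $p$ established in \eqref{4.20} is then used to force $x_o\neq p$. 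Your locate-then-case scheme works too, but to make it rigorous you must specify how the concentration center is selected and then argue that when it happens to be $p$ the rescaled limit is genuinely nontrivial---this is precisely where your anticipated ``deeper bubble at scale $o(R_m^1)$'' obstacle lives. The paper's always-try-$p$-first device sidesteps that issue at the cost of one extra lemma; your route is closer to the Euclidean treatment in \cite{D. Smet} and is slightly more direct once the concentration-compactness dichotomy is in hand.
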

The proof of the above theorem goes through several steps that we organize under the form of lemmas
\begin{Lemma}\label{lem2.1} Let $u_m$ be a Palais-Smale sequence for
$J_h$ at level $\beta$  that converges to a function $u$ weakly
in $H^2_1(M)$ and $L_2(M,\rho_p^2)$, strongly in $L_q(M), 1\le
q<2^*$ and almost everywhere in $M$. Then, $u$ is a weak solution of \eqref{0.1} and  the sequence
$v_m=u_m-u$ is a sequence of Palais-Smale for $J_h$ such that
$$J_h(v_m)=\beta-J_h(u)+o(1).$$
\end{Lemma}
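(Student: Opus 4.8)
The plan is to verify the two assertions of Lemma \ref{lem2.1} in turn: first that the weak limit $u$ solves \eqref{0.1}, then that $v_m=u_m-u$ is again Palais--Smale for $J_h$ with the stated energy shift. For the first assertion, I would pass to the limit in the equation $DJ_h(u_m)\varphi\to 0$ tested against an arbitrary $\varphi\in H^2_1(M)$. The linear term $\int_M\nabla u_m\cdot\nabla\varphi\,dv_g$ converges to $\int_M\nabla u\cdot\nabla\varphi\,dv_g$ by weak convergence in $H^2_1$. For the singular term $\int_M\frac{h}{\rho_p^2}u_m\varphi\,dv_g$ I would use the weak convergence of $u_m$ in $L_2(M,\rho_p^2)$ together with the fact that $\frac{h}{\rho_p}\varphi\in L_2(M,\rho_p^2)$ (here the Hardy inequality \eqref{2.1} applied to $\varphi$, plus continuity of $h$ away from $p$ and boundedness near $p$, is what makes $\frac{h\varphi}{\rho_p^2}$ a legitimate pairing). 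For the critical nonlinearity, $u_m\to u$ a.e. and $(|u_m|^{2^*-2}u_m)$ is bounded in $L^{2^*/(2^*-1)}(M)$ (since $u_m$ is bounded in $H^2_1\hookrightarrow L^{2^*}$), so by the standard Br\'ezis--Lieb / weak-$L^p$ convergence lemma $|u_m|^{2^*-2}u_m\weakly |u|^{2^*-2}u$ in $L^{2^*/(2^*-1)}$, and testing against $\varphi\in L^{2^*}$ gives convergence. Hence $DJ_h(u)\varphi=0$ for all $\varphi$, i.e. $u$ solves \eqref{0.1}.

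For the energy identity, I would compute $J_h(v_m)=J_h(u_m-u)$ by expanding each quadratic term exactly: $\int_M|\nabla(u_m-u)|^2=\int_M|\nabla u_m|^2-2\int_M\nabla u_m\cdot\nabla u+\int_M|\nabla u|^2$, and using $\int_M\nabla u_m\cdot\nabla u\to\int_M|\nabla u|^2$ one gets $\int_M|\nabla v_m|^2=\int_M|\nabla u_m|^2-\int_M|\nabla u|^2+o(1)$; similarly for the $\rho_p^{-2}$-weighted term using weak convergence in $L_2(M,\rho_p^2)$. For the critical term the Br\'ezis--Lieb lemma gives $\int_M|u_m-u|^{2^*}=\int_M|u_m|^{2^*}-\int_M|u|^{2^*}+o(1)$. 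Assembling these and recognizing $J_h(u)$ on the right yields $J_h(v_m)=J_h(u_m)-J_h(u)+o(1)=\beta-J_h(u)+o(1)$.

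For the Palais--Smale property of $v_m$, I would show $DJ_h(v_m)\varphi\to 0$ uniformly over $\|\varphi\|_{H^2_1}\le 1$. Writing $DJ_h(v_m)\varphi=DJ_h(u_m)\varphi-DJ_h(u)\varphi-\big[\int_M(|u_m|^{2^*-2}u_m-|u|^{2^*-2}u-|u_m-u|^{2^*-2}(u_m-u))\varphi\,dv_g\big]$, the first term is $o(1)$ by hypothesis, the second is exactly $0$ since $u$ solves \eqref{0.1}, and the bracketed term tends to $0$ in the dual norm by the nonlinear Br\'ezis--Lieb convergence for gradients (the elementary inequality $\big||a+b|^{2^*-2}(a+b)-|a|^{2^*-2}a-|b|^{2^*-2}b\big|\le C(|a|^{2^*-2}|b|+|b|^{2^*-2}|a|)$, H\"older, and the a.e. convergence $u_m\to u$ with equi-integrability). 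I expect the main obstacle to be this last point: one must justify that the "cross terms" produced by the critical nonlinearity vanish in $(H^2_1)^*$ rather than merely pointwise, which requires the boundedness of $u_m$ in $L^{2^*}$ together with a Vitali-type equi-integrability argument; the weighted Hardy term is comparatively easier since it is only quadratic, but still needs the boundedness of $u_m$ in $L_2(M,\rho_p^2)$ guaranteed by \eqref{2.1} and the Palais--Smale condition. Finally I would note that $v_m\weakly 0$ in $H^2_1$ by construction, which is consistent with its role as the starting point for the blow-up iteration.
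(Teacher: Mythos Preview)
Your proposal is correct and follows essentially the same route as the paper: weak limits in each term to show $u$ solves \eqref{0.1}, the Br\'ezis--Lieb lemma for the energy splitting, and the pointwise inequality $\big||a+b|^{2^*-2}(a+b)-|a|^{2^*-2}a-|b|^{2^*-2}b\big|\le C(|a|^{2^*-2}|b|+|b|^{2^*-2}|a|)$ together with H\"older to kill the cross term in $DJ_h(v_m)$. The only cosmetic differences are that the paper first records the boundedness of $u_m$ in $H^2_1(M)$ (redundant here since weak convergence is assumed) and disposes of the cross term via the compact embedding $H^2_1\hookrightarrow L^q$ for the subcritical exponents $\frac{2^*(2^*-2)}{2^*-1}$ and $\frac{2^*}{2^*-1}$, whereas you invoke a Vitali/equi-integrability argument; both lead to the same conclusion.
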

\begin{proof}
Let $u_m$ be a Palais-Smale sequence for $J_h$, at level $\beta$. Then,   $DJ_h(u_m)u_m=o(||u_m||_{H^2_1(M)})$  which implies that
\begin{equation*}
J_h(u_m)=\frac{1}{n}\int_Mf|u_m|^{2^*}dv_g=\beta+o(1)+o(||u_m||_{H^2_1(M)}).
\end{equation*}
 which means that $u_m$ is bounded in
$L_{2^*}(M)$ and then in $L_2(M)$. Furthermore, we have
\begin{equation*}
\int_M|\nabla u_m|^2dv_g=
nJ_h(u_m)+\int_M\frac{h}{\rho_p^2}u_m^2dv_g+o(||u_m||_{H^2_1(M)})
\end{equation*}
By continuity of $h$ on $p$, for all
$\epsilon>0$ there exists $\delta>0$ such that
\begin{eqnarray*}
&\int_M|\nabla u_m|^2dv_g\le
n\beta+(\varepsilon+h(p))\int_{B(p,\delta)}\frac{u_m^2}{\rho_p^2}dv_g
&\\&+\delta^{-2}\int_{M\setminus B(p,\delta)}h_\alpha
u_m^2dv_g+o(||u_m||_{H^2_1(M)}),&
\end{eqnarray*}
then, by applying Hardy inequality \eqref{2.1} that for every
$\varepsilon>0$ small there exists a constant $A(\varepsilon)$ such
that
\begin{eqnarray*}
&\int_M|\nabla u_m|^2dv_g\le
n\beta+(\varepsilon+h_\alpha(p))(\varepsilon+
K^2(n,2,-2))\int_{M}|\nabla u_m|^2dv_g
&\\&+A(\varepsilon)\int_{M} u_m^2dv_g+o(||u||_{H^2_1(M)})+o(1)&
\end{eqnarray*}
since $0<h_\alpha(p)<\frac{1}{K^2(n,2,-2)}$, we can find
$\varepsilon >0$ small such that
$1-(\varepsilon+h_\alpha(p))(\varepsilon+ K^2(n,2,-2))>0$ which
implies that $\int_M|\nabla u_m|^2dv_g$ is bounded.\\
Now, if the sequence $u_m$ converges to a function $u$ weakly in $H^2_1(M)$ and $L_2(M,\rho_p^2)$, strongly in $L_2(M)$ and almost everywhere in $M$, then $u$ must satisfy
\begin{equation}\label{3.5}
DJ_h(u)\varphi=0,\forall\varphi \in H^2_1(M).
\end{equation}
 In fact, the sequence $|u_m|^{2^*-2}u_m$ is
bounded in $L_{\frac{2^*}{2^*-1}}(M)$ and converges almost
everywhere to $|u|^{2^*-2}u$ , we get that $u_m^{2^*-2}u_m$
converges weakly in $L_{\frac{2^*}{2^*-1}}(M)$ to $u^{2^*-2}u$.
This clearly implies that \eqref{3.5} is satisfied.\\
Moreover, for $\varphi\in
 H_1^2(M)$, we can write
\begin{equation*}
DJ_h(v_m)\varphi=DJ_h(u_m)\varphi-DJ_h(u)\varphi+
\Phi(v_m)\varphi+o(1),
\end{equation*}
 with
  \begin{eqnarray*}
\Phi(v_m)\varphi&=&\int_M\left(|v_m+u|^{2^*-2}(v_m+u)-
|v_m|^{2^*-2}v_m-|u|^{2^*-2}u\right)\varphi dv_g.
\end{eqnarray*}
Knowing that there exists a positive constant $C$ independent of $m$ such that
\begin{equation*}
\mid|v_m+u|^{2^*-2}(v_m+u)-
|v_m|^{2^*-2}v_m-|u|^{2^*-2}u\mid\le
C(|v_m|^{2^*-2}|u|+|u|^{2^*-2}|v_m|),
\end{equation*}
 we get, after applying Hölder inequality, that there exists a  positive constant $C$ such that
\begin{equation*}
|\Phi(v_m)\varphi|\le C
\left(\||v_m|^{2^*-2}|u|\|_{L_{\frac{2^*}{2^*-1}}(M)}+
\||u|^{2^*-2}|v_m|\|_{L_{\frac{2^*}{2^*-1}}(M)}\right)\|\varphi\|_{L_{2^*}(M)},
\end{equation*}
which gives that $\Phi(v_m)\varphi=o(1),\forall\varphi \in H^2_1(M)$, since both
$\frac{2^*(2^*-2)}{2^*-1}$ and $ \frac{2^*}{2^*-1}$ are smaller than
$2^*$ and the inclusion of $H^2_1(M)$ in $L_q(M)$ is compact for $q<2^*$. By \eqref{3.5}, we get then that
\begin{equation}\label{}
  DJ_h(v_m)\varphi=o(1).
\end{equation}
On the other hand, by the weakly converges in $H^2_1(M)$ and $L_2(M,\rho_p^2)$, we can also write
\begin{equation*}
 J_h(v_m)=J_h(u_m)-J_h(u)+\Psi(u_m)+o(1),
 \end{equation*}
 with
 $$\Psi(u_m)=\frac{1}{2^*}\int_M(|u_m|^{2^*}-|u|^{2^*}-|v_m|^{2^*})dv_g,$$
 which by the Brezis-Lieb convergence Lemma equals to $o(1)$, hence we obtain
\begin{equation*}
J_h(v_m)=\beta-J_h(u)+o(1).
\end{equation*}
This ends the proof of the lemma.
\end{proof}
\begin{Lemma}\label{lem3.2} Let $v_m$ be a Palais-Smale sequence of $J_h$ at level $\beta$ that converges
weakly to $0$ in $H_1^2(M)$. If
$\beta<\beta^*=\frac{\left(1-h(p)K^2(n,2,-2)\right)^{\frac{n}{2}}}{nK(n,2)^n}$, then  $v_m$ converges strongly to $0$ in $H_1^2(M)$.
\end{Lemma}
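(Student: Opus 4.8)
The plan is to argue by contradiction on the level $\beta$, reducing everything to the two scalar identities coming from the Palais--Smale condition and then squeezing the resulting numbers between the optimal Sobolev inequality \eqref{1.1} and the Hardy inequality \eqref{2.1}. Since $v_m\weakly 0$ in $H^2_1(M)$, the sequence is bounded there, so by Rellich--Kondrakov $v_m\to 0$ strongly in $L_q(M)$ for every $q<2^*$; in particular $||v_m||_{L_2(M)}\to 0$. Put $A_m:=\int_M|\nabla v_m|^2dv_g-\int_M\frac{h}{\rho_p^2}v_m^2dv_g$ and $B_m:=\int_M|v_m|^{2^*}dv_g$. From $J_h(v_m)\to\beta$ one reads off $\frac{1}{2}A_m-\frac{1}{2^*}B_m=\beta+o(1)$, and, the sequence being a bounded Palais--Smale sequence, $DJ_h(v_m)v_m=A_m-B_m=o(1)$; together these give $\frac{1}{n}B_m=\beta+o(1)$, so $B_m\to n\beta$ and $A_m\to n\beta$. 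Moreover, exactly as in the proof of Lemma~\ref{lem2.1}, the continuity of $h$ at $p$ combined with \eqref{2.1} yields, for every $\varepsilon>0$,
\begin{equation*}
\int_M\frac{h}{\rho_p^2}v_m^2\,dv_g\le\theta_\varepsilon\int_M|\nabla v_m|^2\,dv_g+o(1),\qquad\theta_\varepsilon:=\bigl(h(p)+\varepsilon\bigr)\bigl(K^2(n,2,-2)+\varepsilon\bigr),
\end{equation*}
where one splits $M$ into a small ball $B(p,\delta)$ on which $h\le h(p)+\varepsilon$ and its complement on which $\rho_p^{-2}$ is bounded (so that piece is $o(1)$ by $||v_m||_{L_2(M)}\to 0$), and absorbs the constant $A(\varepsilon)$ of \eqref{2.1} into $o(1)$. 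Fixing $\varepsilon>0$ so small that $\theta_\varepsilon<1$ (possible since $h(p)K^2(n,2,-2)<1$), this gives $A_m\ge(1-\theta_\varepsilon)||\nabla v_m||_{L_2(M)}^2+o(1)$, hence $\beta\ge0$; and if $\beta=0$ we immediately get $||\nabla v_m||_{L_2(M)}\to0$, which together with $||v_m||_{L_2(M)}\to0$ proves $v_m\to0$ in $H^2_1(M)$.

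It remains to rule out $0<\beta<\beta^*$, so assume $\beta>0$. The optimal Sobolev inequality \eqref{1.1} together with $||v_m||_{L_2(M)}\to0$ gives $B_m^{2/2^*}\le K^2(n,2)||\nabla v_m||_{L_2(M)}^2+o(1)$, while from the identity $||\nabla v_m||_{L_2(M)}^2=A_m+\int_M\frac{h}{\rho_p^2}v_m^2dv_g$ and the estimate above, $||\nabla v_m||_{L_2(M)}^2\le(1-\theta_\varepsilon)^{-1}A_m+o(1)$. Hence $B_m^{2/2^*}\le\frac{K^2(n,2)}{1-\theta_\varepsilon}A_m+o(1)$. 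Letting $m\to\infty$, using $B_m\to n\beta>0$ and $A_m\to n\beta$, and dividing by $(n\beta)^{2/2^*}$, we obtain $1\le\frac{K^2(n,2)}{1-\theta_\varepsilon}(n\beta)^{1-2/2^*}=\frac{K^2(n,2)}{1-\theta_\varepsilon}(n\beta)^{2/n}$, that is $\beta\ge\frac{(1-\theta_\varepsilon)^{n/2}}{nK(n,2)^n}$. Now let $\varepsilon\to0$, so that $\theta_\varepsilon\to h(p)K^2(n,2,-2)$: this gives $\beta\ge\frac{(1-h(p)K^2(n,2,-2))^{n/2}}{nK(n,2)^n}=\beta^*$, contradicting $\beta<\beta^*$. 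Therefore $\beta=0$, and by the previous paragraph $v_m\to0$ strongly in $H^2_1(M)$.

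The only genuinely delicate point is the Hardy estimate near $p$: one must push the constant multiplying $||\nabla v_m||_{L_2(M)}^2$ arbitrarily close to $h(p)K^2(n,2,-2)$, which is exactly what makes $\beta^*$ the sharp compactness threshold. This rests on using the continuity of $h$ at $p$ on a shrinking ball, the boundedness of $\rho_p^{-2}$ away from $p$, the sharp form of \eqref{2.1}, and performing the two limits in the right order ($m\to\infty$ first, then $\varepsilon\to0$). Everything else is the routine Brezis--Lieb / Sobolev / Rellich bookkeeping already deployed in Lemma~\ref{lem2.1}.
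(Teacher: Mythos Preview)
Your proof is correct and follows essentially the same route as the paper's: both exploit the Palais--Smale identities to get $A_m,B_m\to n\beta$, then squeeze $\|\nabla v_m\|_{L_2}^2$ between the Hardy estimate \eqref{2.1} (giving \eqref{eqn3.3}) and the Sobolev inequality \eqref{1.1} (giving \eqref{eqn3.4}) to force $\beta\ge\beta^*$ whenever $\beta>0$. The only cosmetic difference is that you send $\varepsilon\to0$ at the end, whereas the paper fixes $\varepsilon$ small enough and reads off the contradiction directly.
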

\begin{proof} Let $v_m$ is a Palais-Smale sequence of $J_h$ at level
$\beta$ that converges to $0$ weakly in $H_1^2(M)$, then
$\int_Mv_m^2dv_g=o(1)$ and
\begin{equation*}
\beta=\frac{1}{n}\int_M(|\nabla
v_m|^2-\frac{h}{\rho^2_p}v_m^2)dv_g=\frac{1}{n}\int_M|v_m|^{2^*}dv_g+o(1).
\end{equation*}
This implies that $\beta\ge0$. Hence, on the one hand, by Hardy
inequality \eqref{2.1} we get as in Lemma 3.2, that for small enough
$\varepsilon>0$,
\begin{equation}\label{eqn3.3}
\int_M|\nabla
v_m|^2dv_g\le\frac{n\beta}{1-[(h(p)+\varepsilon)(\varepsilon+
K^2(n,2,-2))]}+o(1),
\end{equation}
and on the other hand, by Sobolev inequality \eqref{1.1}, we also
get
\begin{equation}\label{eqn3.4}
\int_M|\nabla v_m|^2dv_g \ge \left(\frac{n\beta}{K^{2^*}(n,2)}\right)^{\frac{2}{2^*}}+o(1).
\end{equation}
Now, suppose that $\beta>0$, then the above inequalities
\eqref{eqn3.3} and \eqref{eqn3.4} , for  $m$ big  enough, give
\begin{equation*}
\beta\ge
\frac{\left(1-(h(p)+2\varepsilon)(K^2(n,2,-2)+\varepsilon)\right))^{\frac{n}{2}}}{nK(n,2)^n},
\end{equation*}
that is
\begin{equation*}
    \beta^{\frac{2}{n}}\ge{\beta^{*}}^{\frac{2}{n}}-\frac{2\varepsilon^2+\varepsilon(h(p)+2\varepsilon
K^2(n,2,-2))}{n^{\frac{2}{n}}K(n,2)^2}.
\end{equation*}
By assumption $\beta^*>\beta$, if we take  $\varepsilon>0$ small
enough so that
\begin{equation*}
-2\varepsilon^2-\varepsilon(h(p)-2\varepsilon
K^2(n,2,-2))+n^{\frac{2}{n}}K(n,2)^2({\beta^*}^{\frac{2}{n}}-\beta^{\frac{2}{n}})>0,
\end{equation*}
we get a contradiction. Thus $\beta=0$ and \eqref{eqn3.3} assures
that
\begin{equation*}
\int_M|\nabla v_m|^2dv_g=o(1),
\end{equation*}
that is $v_m\to0$ strongly in $H_1^2(M)$.
\end{proof}
\begin{Lemma}\label{lem3.3} Let $v_m$ be  Palais-Smale sequence for $J_h$ at level  $\beta$
that converges weakly and not strongly to $0$ in $H_1^2(M)$. Then,
there exists a sequence of positive reals $R_m\to 0 $ such
that, up to a subsequence
\begin{equation*}
\hat{v}_m(x)=R_m^{\frac{n-2}{2}}\eta_{r}(R_m x))v_m(
\exp_{p}(R_m x)),
\end{equation*}
where $0<r<\frac{\delta_g}{2}$, converges weakly in $D^{1,2}(\mathbb{R}^n)$
to a function $v\in D_1^2(\mathbb{R}^n)$ weak solution of \eqref{0.3}.
\end{Lemma}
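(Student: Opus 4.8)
The plan is to run the blow-up extraction of \cite{Druet-hebbey-robert}, the one new feature being that rescaling at the singular point $p$ turns the Hardy term $\frac{h}{\rho_p^2}$ into the inverse-square potential $\frac{h(p)}{|x|^2}$ of \eqref{0.3}. I would start from Lemma \ref{lem3.2}: since $v_m\weakly0$ but not strongly, necessarily $\beta\ge\beta^*>0$, and (as in the proof of that lemma) $\int_M v_m^2\,dv_g=o(1)$, $\int_M|\nabla v_m|^2\,dv_g$ is bounded, and $\frac1n\int_M|v_m|^{2^*}\,dv_g=\beta+o(1)>0$; so $v_m$ carries a fixed positive amount of $L_{2^*}$-mass which, because $v_m\weakly0$ and $H_1^2(M)\hookrightarrow L_q(M)$ is compact for $q<2^*$, must concentrate at a scale tending to $0$. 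To locate that scale at $p$, I would fix a small $\delta_0\in(0,n\beta)$ below the Sobolev concentration threshold and define $R_m>0$ by $\int_{B(p,R_m)}|v_m|^{2^*}\,dv_g=\delta_0$ (legitimate for $m$ large, since $r\mapsto\int_{B(p,r)}|v_m|^{2^*}\,dv_g$ increases continuously from $0$ past $\delta_0$), and then check $R_m\to0$. Here Lemma \ref{lem3.3} is understood as the case in which the first blow-up of $v_m$ occurs at $p$; I expect the main obstacle of the whole analysis to be exactly this case splitting — deciding whether the concentration point is $p$, and, when it is, showing that centering the rescaling at $p$ at scale $R_m$ really captures the bubble rather than killing it. The complementary case, a blow-up point staying away from $p$, is handled by the parallel extraction whose limiting equation is the potential-free \eqref{0.2}.

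Granting this choice of $R_m$, set $\hat v_m(x)=R_m^{\frac{n-2}{2}}\eta_r(R_m x)\,v_m(\exp_p(R_m x))$; the cutoff makes $\hat v_m$ a compactly supported function on $\mathbb{R}^n$ whose support lies, for $m$ large, in the ball $\{R_m|x|<\delta_g\}$ on which $\exp_p$ is a diffeomorphism. Passing to geodesic normal coordinates at $p$ and changing variables $y=\exp_p(R_m x)$, and noting that on the fixed annulus $r\le\rho_p\le2r$ one has $v_m\to0$ in $L_2$ by Rellich (so the terms coming from $\nabla\eta_r$ are $o(1)$), one gets $\int_{\mathbb{R}^n}|\nabla\hat v_m|^2\,dx=\int_M|\nabla v_m|^2\,dv_g+o(1)$ and $\int_{\mathbb{R}^n}|\hat v_m|^{2^*}\,dx=\int_M|v_m|^{2^*}\,dv_g+o(1)$; hence $\hat v_m$ is bounded in $D^{1,2}(\mathbb{R}^n)$ and, up to a subsequence, $\hat v_m\weakly v$ in $D^{1,2}(\mathbb{R}^n)$, $\hat v_m\to v$ in $L^q_{loc}$ for $q<2^*$ and almost everywhere. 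The crucial point is that for $R_m|x|<\delta_g$ one has $\rho_p(\exp_p(R_m x))=R_m|x|$, so the rescaling makes $\frac{h}{\rho_p^2}$ pick up a factor $R_m^2$ and become $\frac{h(\exp_p(R_m x))}{|x|^2}\to\frac{h(p)}{|x|^2}$ by continuity of $h$ at $p$, while the $L_2$ mass term acquires a vanishing factor $R_m^2$ and the pulled-back metric coefficients tend to the Euclidean ones locally uniformly.

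It remains to pass to the limit in the equation. For $\varphi\in\C_o^\infty(\mathbb{R}^n)$ I would test the differential $DJ_h(v_m)$ (which tends to $0$) against $\varphi_m(x)=R_m^{\frac{2-n}{2}}\varphi\big(R_m^{-1}\exp_p^{-1}(x)\big)$, a function supported near $p$ and bounded in $H_1^2(M)$; after the change of variables this reads, with the metric coefficients $\to\delta_{ij}$ and $h_m=h(\exp_p(R_m\,\cdot))\to h(p)$ locally uniformly, an identity of the form $\int_{\mathbb{R}^n}\langle\nabla\hat v_m,\nabla\varphi\rangle_{g_m}\,dx-\int_{\mathbb{R}^n}\frac{h_m}{|x|^2}\hat v_m\varphi\,dx-\int_{\mathbb{R}^n}|\hat v_m|^{2^*-2}\hat v_m\varphi\,dx=o(1)$. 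Using the Hardy inequality \eqref{2.1} to bound the singular integral near $0$ uniformly in $m$ (ruling out loss of mass at the singularity), the weak $D^{1,2}$-convergence of $\hat v_m$, and the a.e.\ convergence of $|\hat v_m|^{2^*-2}\hat v_m$ together with its $L^{2^*/(2^*-1)}_{loc}$-boundedness, I would let $m\to\infty$ to obtain
\[
\int_{\mathbb{R}^n}\Big(\langle\nabla v,\nabla\varphi\rangle-h(p)\frac{v\varphi}{|x|^2}-|v|^{2^*-2}v\varphi\Big)\,dx=0\qquad\text{for all }\varphi\in\C_o^\infty(\mathbb{R}^n),
\]
i.e.\ $v$ is a weak solution of \eqref{0.3}.

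Finally, to see that $v\not\equiv0$: since $\delta_0$ was chosen below the concentration threshold, the $L_{2^*}$-mass of $\hat v_m$ in every unit ball contained in $B(0,2)$ is at most $\delta_0$ by the very definition of $R_m$, which forbids concentration there and upgrades $\hat v_m\to v$ to strong convergence in $L_{2^*}(B(0,1))$; hence $\int_{B(0,1)}|v|^{2^*}\,dx=\delta_0>0$, so $v$ is a nontrivial weak solution of \eqref{0.3}.
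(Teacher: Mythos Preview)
Your proposal misreads the lemma: the statement does not assert $v\ne0$, and the paper does not try to prove it here. The paper's scheme is to rescale at $p$ with a scale tied to the \emph{gradient} energy (take the smallest $r_m$ with $\int_{B(p,r_m)}|\nabla v_m|^2dv_g=\gamma$ and set $R_m\sim r_m/r$), extract a weak $D^{1,2}$-limit $v$, and then branch: if $v\ne0$ one checks it solves \eqref{0.3} and deduces $R_m\to0$ \emph{a posteriori} from $v_m\to0$ in $L_2(M)$ versus $\hat v_m\to v\ne0$ in $L^2_{loc}$; if $v=0$ the paper moves on to Lemma~\ref{lem3.5}, where the rescaling centre is allowed to drift to some $x_o\ne p$ and a nontrivial limit solving \eqref{0.2} is produced. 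The case splitting you correctly flag as the main obstacle is not resolved inside this lemma at all; it is deferred. Your passage to the limit in the equation (testing against $\varphi_m$, rescaling, and turning $h/\rho_p^2$ into $h(p)/|x|^2$) is correct and matches the paper.

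Your attempt to force $v\ne0$ by normalising $\int_{B(p,R_m)}|v_m|^{2^*}dv_g=\delta_0$ has a concrete gap. After rescaling this gives $\int_{B(0,1)}|\hat v_m|^{2^*}\approx\delta_0$, but it says nothing about the mass in an arbitrary unit ball $B(a,1)\subset B(0,2)$: unscaling $B(a,1)$ yields a ball of radius $\sim R_m$ centred at $\exp_p(R_m a)\ne p$, and your choice of $R_m$ carries no information about such balls. Hence the ``forbids concentration there'' step does not follow, and the upgrade to strong $L_{2^*}(B(0,1))$-convergence is unjustified. The Struwe normalisation that does give that control is $\max_{x\in M}\int_{B(x,R_m)}|v_m|^{2^*}=\delta_0$ together with a maximising centre $x_m$; but then $x_m$ need not be $p$, which is exactly why the paper separates the two extractions. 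The same issue underlies your unproved ``check $R_m\to0$'': with the centre pinned at $p$, this fails whenever no $L_{2^*}$-mass concentrates at $p$, so it cannot be a free consequence of $v_m\weakly0$ but not strongly. (Minor point: the identities $\int_{\R^n}|\nabla\hat v_m|^2=\int_M|\nabla v_m|^2+o(1)$ and its $L_{2^*}$ analogue are not correct as written, since the left sides only see $B(p,2r)$; this does not, however, affect the $D^{1,2}$-boundedness you actually need.)
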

\begin{proof} Since the Palais-Smale sequence $v_m$ of $J_h$ at
level $\beta$ converges  weakly and not strongly  in $H_1^2(M)$ to
$0$, then by Lemma \ref{lem3.2}  $\beta\ge\beta^*$.\\
Up to a subsequence, $v_m$ converges strongly to $0$ in
$L_2(M)$.Then, similar computations as in Lemma \ref{lem2.1} give that for all
$\varepsilon>0$ small
$$n\beta^*+o(1)\le\int_M|\nabla v_m|^2dv_g\le\frac{n\beta}{1-(h(p)+\varepsilon)(K^2(n,2,-2)+\varepsilon)}+o(1).$$
In such way that there exist two positive constant such that
\begin{equation}\label{eq3.6}
C_1\le\int_M|\nabla v_m|^2dv_g\le c_2.
\end{equation}
Let $\gamma$ a small positive constant such that
\begin{equation}\label{eq3.7}
    \underset{m\to\infty}{\lim\sup}\int_M|\nabla v_m|^2>\gamma>0.
\end{equation}
Up to a subsequence, for each $m>0,$ we can find the smallest
constant $r_m>0$  such that
\begin{equation}\label{3.11}
\int_{B(p,r_m)}|\nabla v_m|^2dv_g=\gamma.
\end{equation}
Note that for $x\in B(\frac{\delta_g}{2})$, it holds
\begin{equation}\label{}
    dist_g(p,\exp_p(x))\le C_o|x|.
\end{equation}
Let $0<r<\frac{\delta_g}{2}$ and take $R_m$, $ 0<R_m<1$ such that $2C_orR_m\le r_m$, in such way that $\exp_p(R_mB(2r))\subset B(p,r_m)$.\\
Let $x\in
B(R_m^{-1}\delta_g)\subset\mathbb{R}^n$, and define
\begin{eqnarray*}
  \hat{v}_m(x) &=& R_m^{\frac{n-2}{2}}\eta_{r}(R_mx)v_m(\exp_{p}(R_m
  x)), \emph{and }
   \\
 \hat{g}_m(x) &=& (\exp_{p}^*g)(R_m x)).
\end{eqnarray*}
We show that the sequence $ \hat{v}$ is bounded in $D^{1,2}(\R^n)$. First we have
\begin{eqnarray}\label{3.13}
\nonumber\int_{B(2r)}|\nabla_{\hat{g}} \hat{v}_m|^2 dv_{\hat{g}}&=&\int_{B(r)}|\nabla_{\hat{g}} \hat{v}_m|^2 dv_{\hat{g}}
+\int_{B(2r)\setminus B(r)}|\nabla_{\hat{g}} \hat{v}_m|^2 dv_{\hat{g}}\\&=& \nonumber
\int_{\exp_p(R_m(B(r))}|\nabla_g v_m|^2 dv_g+\int_{B(2r)\setminus B(r)}|\nabla_{\hat{g}} \hat{v}_m|^2 dv_{\hat{g}}\\ \nonumber &\le&
\gamma+2\int_{\exp_p(R_m(B(2r))\setminus\exp_p(R_m(B(r))}|\nabla_g v_m|^2 dv_g\\ \nonumber &+&\int_{\exp_p(R_m(B(2r))\setminus\exp_p(R_m(B(r))}v_m^2|(\nabla_g \eta)(\exp^{-1}(x))|^2dv_g\\ &\le& 3\gamma+o(1),
\end{eqnarray}
here we have used the strong convergence of $v_m$ to $0$ in $L_2(M)$. Similarly, we can obtain that
\begin{equation}\label{}
    \int_{B(2rR_m^{-1})\setminus B(2r)}|\nabla_{\hat{g}} \hat{v}_m|^2 dv_{\hat{g}}\le C,
\end{equation}
for some positive constant $C$ and thus $\int_{\R^n}|\nabla_{\hat{g}} \hat{v}_m|^2 dv_{\hat{g}} $ is bounded.\\
On the other hand, since $\hat{g}_m$ goes smoothly to the Euclidean metric on $\R^n$, we can find a constant $0<C<1$ such that for $m$  large and $u$ such that $supp u\subset B(2rR_m^{-1})$, it holds
\begin{equation}\label{}
  \int_{\R^n}|\nabla u|^2dx \le C\int_{\R^n}|\nabla_{\hat{g}} u|^2 dv_{\hat{g}},
\end{equation}
thus, we get that $\hat{v}_m$ is bounded in $D^{1.2}(R^n)$.\\
 Consequently, up to a subsequence, $\tilde{v}_m$ converges weakly to some
function $v\in D^{1,2}(\R^n)$.\\
Suppose that $v\neq0$,  we show that $v$ is a weak solution on $D^{1,2}(\R^n )$
to \eqref{0.3}. First, notice that since the sequence $v_m$ converges strongly in $L_2(M)$
to $0$ and the sequence $\hat{v}_m$ converges strongly in $L_{2}(B(2r))$ to $v\neq0$, it follows that $R_m\to 0$.\\
Let $\varphi\in\C^\infty_o(\mathbb{R}^n)$ be a function with compact
support included in the ball $B(2r)$. For $m$ large, define
on $M$ the sequence $\varphi_m$ as
$$\varphi_m(x)=R_m^{\frac{2-n}{2}}\varphi(R_m^{-1}(\exp^{-1}_{p}( x))).$$
Then, $\varphi_m$ is bounded in $H^2_1(M)$ and
\begin{equation}\label{}
   \int_M\nabla v_m\nabla \varphi_m dv_g+R_m^n\int_M\nabla\eta\nabla\varphi_m v_mdv_g=\int_{\mathbb{R}^n}\nabla\tilde{v}_m\nabla\varphi dv_{\hat{g}_m},
\end{equation}
by the strong convergence of $v_m$ in $L_2(M)$ to $0$, after doing a Holder inequality, the second term of the left-hand side converges to $0$. Then we obtain
\begin{equation}\label{}
   \int_M\nabla v_m\nabla \varphi_m dv_g=\int_{\mathbb{R}^n}\nabla\tilde{v}_m\nabla\varphi dv_{\hat{g}_m}+o(1),
\end{equation}
Moreover,
\begin{eqnarray*}
   \int_M\frac{h_m}{\rho_p^2}v_m\varphi_m dv_g &=&
  R_m^{2}\int_{\mathbb{R}^n}\frac{h_m(\exp_{p}(R_m x) )}
  {dist_{\hat{g}_m}^2(0, R_m x)}\tilde{v}_m\varphi dv_ {\hat{g}_m}, \emph{ and }\\
   \int_M |v_m|^{2^*-2}v_m\varphi_m
   dv_g&=&\int_{\mathbb{R}^n}|\tilde{v}_m|^{2^*-2}\tilde{v}_m\varphi
   dv_ {\hat{g}_m}.
\end{eqnarray*}
Since $v_m$ is a Palais-Smale sequence of $J_h$, by passing to the limit when
$m\to\infty$,  we get that $v$ is weak solution  of \eqref{0.3}.
\end{proof}
\begin{Lemma}\label{lem3.4} Let $v$ be the solution  of \eqref{0.3} given by Lemma \ref{lem3.3} and such that $v\neq0$, then
up to a subsequence,
\begin{equation*}
w_m=v_m-R_m^{\frac{2-n}{2}}\eta_r(\exp^{-1}(x))
v(R_m^{-1}\exp_{p}^{-1}(x)),
\end{equation*}
 where
$0<r<\frac{\delta_g}{2}$,  is a Palais-Sequence for $J_h$
 that weakly converges to $0$ in
$H_1^2(M)$ and $J_h(w_m)=J_h(v_m)-J_\infty(v)$.
\end{Lemma}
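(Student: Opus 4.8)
The strategy is the standard "one bubble extraction" argument: subtract off the rescaled first bubble from $v_m$ and verify that what remains is still a Palais--Smale sequence, now at the reduced energy level, and that it converges weakly to zero. Write $B_m(x) = R_m^{\frac{2-n}{2}}\eta_r(\exp_p^{-1}(x))\,v(R_m^{-1}\exp_p^{-1}(x))$, so that $w_m = v_m - B_m$. First I would record the elementary facts about $B_m$: since $v\in D^{1,2}(\R^n)$ and $R_m\to 0$, a change of variables $x = \exp_p(R_m y)$ together with the smooth convergence of the pulled-back metric $\hat g_m$ to the Euclidean metric shows that $\|B_m\|_{H^2_1(M)}^2 = \int_{\R^n}|\nabla v|^2\,dy + o(1)$ (the lower-order $L^2$ term carries a factor $R_m^2$ and vanishes), and similarly $B_m\weakly 0$ in $H^2_1(M)$ because it concentrates at the point $p$. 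Combined with $v_m\weakly 0$, this gives $w_m\weakly 0$ in $H^2_1(M)$ immediately.

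Next, the energy identity. I would expand
\begin{equation*}
J_h(w_m) = J_h(v_m) - \Big(\tfrac12\int_M(|\nabla B_m|^2 - \tfrac{h}{\rho_p^2}B_m^2)\,dv_g\Big) + \Big(\text{cross terms}\Big) + \Big(\tfrac1{2^*}\int_M(|v_m|^{2^*} - |w_m|^{2^*} - |B_m|^{2^*})\,dv_g\Big).
\end{equation*}
The quadratic-form cross term $\int_M(\nabla v_m\nabla B_m - \tfrac{h}{\rho_p^2}v_m B_m)\,dv_g$ is handled by rescaling: it equals $\int_{\R^n}(\nabla\tilde v_m\nabla v - h(p)\tfrac{\tilde v_m v}{|x|^2})\,dx + o(1)$, using the convergence of $h(\exp_p(R_m x))$ to $h(p)$ and of $\mathrm{dist}_{\hat g_m}^2(0,R_m x)/R_m^2$ to $|x|^2$ as in Lemma \ref{lem3.3}; since $\tilde v_m\weakly v$ weakly in $D^{1,2}(\R^n)$ and $v$ solves \eqref{0.3}, this converges to $\int_{\R^n}(|\nabla v|^2 - h(p)\tfrac{v^2}{|x|^2})\,dx = \int_{\R^n}|v|^{2^*}\,dx$. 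The pure $B_m$ term converges by the same rescaling to $\tfrac12\int_{\R^n}(|\nabla v|^2 - h(p)\tfrac{v^2}{|x|^2})\,dx$. For the nonlinear term I would invoke a Brezis--Lieb type splitting in the rescaled variables (the metrics $\hat g_m\to$ Euclidean uniformly on compacts, and on the complement of the bubble support $B_m$ is negligible), yielding $\tfrac1{2^*}\int(|v_m|^{2^*}-|w_m|^{2^*}-|B_m|^{2^*}) = o(1)$. Assembling these, all the $|v|^{2^*}$ contributions combine to exactly $-J_\infty(v)$, giving $J_h(w_m) = J_h(v_m) - J_\infty(v) + o(1)$.

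Then the Palais--Smale property. For $\varphi\in H^2_1(M)$ I would write $DJ_h(w_m)\varphi = DJ_h(v_m)\varphi - DJ_h(B_m)\varphi - \big(\text{difference of nonlinear terms}\big)$. The term $DJ_h(v_m)\varphi = o(\|\varphi\|)$ by hypothesis. For $DJ_h(B_m)\varphi$: using the rescaling and testing against $\varphi$ pulled back appropriately, the linear part converges against the equation \eqref{0.3} solved by $v$, and one checks it is $o(\|\varphi\|_{H^2_1(M)})$ uniformly — this is where the precise bookkeeping of the cut-off $\eta_r$ and the error $\int_M R_m^n \nabla\eta\nabla(\cdot) B_m$ enters, exactly as in the computation displayed in Lemma \ref{lem3.3}. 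The nonlinear difference $\int_M(|v_m|^{2^*-2}v_m - |w_m|^{2^*-2}w_m - |B_m|^{2^*-2}B_m)\varphi\,dv_g = o(\|\varphi\|)$ follows from the pointwise inequality $\big||a+b|^{2^*-2}(a+b) - |a|^{2^*-2}a - |b|^{2^*-2}b\big|\le C(|a|^{2^*-2}|b| + |b|^{2^*-2}|a|)$ and Hölder, together with the fact that $B_m$ and $w_m$ "separate" in $L^{2^*}$ (the bubble concentrates while $w_m\weakly 0$), a now-classical argument.

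\textbf{Main obstacle.} The delicate point is not the energy identity — which is essentially algebra plus Brezis--Lieb — but verifying the Palais--Smale property uniformly in $\varphi\in H^2_1(M)$ near the concentration point $p$, in particular controlling the singular term $\int_M \tfrac{h}{\rho_p^2} B_m\,\varphi\,dv_g$ and the cut-off error terms so that they are genuinely $o(\|\varphi\|_{H^2_1(M)})$ rather than merely $o(1)$ for each fixed $\varphi$. This requires the Hardy inequality \eqref{2.1} (and its localized version on $B(p,\delta)$) to bound the singular contribution, the smooth convergence $\hat g_m\to$ Euclidean to pass integrals between $M$ and $\R^n$, and careful use of $R_m\to 0$ to kill the terms carrying powers of $R_m$; assembling all of this into a clean uniform estimate is the technical heart of the lemma.
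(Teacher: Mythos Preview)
Your proposal is correct and follows essentially the same approach as the paper: define the bubble $\B_m$, show $\B_m\weakly 0$ (hence $w_m\weakly 0$), expand the quadratic and Hardy terms by rescaling to $\R^n$ and using $\hat v_m\weakly v$, invoke Brezis--Lieb for the $2^*$ term, and then verify $DJ_h(\B_m)\to 0$ uniformly in $\varphi$ via the equation \eqref{0.3}. The paper carries out the rescaling computations more explicitly by the ``split into $B(R)$ and $\R^n\setminus B(R)$, control the tail by $\varepsilon_R$, let $R\to\infty$'' device, whereas you phrase the same limits via weak convergence in $D^{1,2}(\R^n)$ and $L^2(\R^n,|x|^{-2}dx)$; and your organization of the Palais--Smale step as $DJ_h(w_m)=DJ_h(v_m)-DJ_h(\B_m)-(\text{nonlinear remainder})$ is in fact slightly cleaner than the paper, which proves only $DJ_h(\B_m)\to 0$ and leaves the nonlinear remainder implicit---but the substance is the same, and you have correctly identified the singular Hardy term near $p$ as the point requiring care.
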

\begin{proof}
 For $0<r<\frac{\delta_g}{2}$, put
\begin{equation*}
\B_m(x)=R_m^{\frac{2-n}{2}}\eta_r(\exp_p^{-1}(x))
v(R_m^{-1}\exp_p^{-1}( x)),x\in M
\end{equation*}
in such way that
\begin{equation*}
w_m=v_m-\B_m.
\end{equation*}
We begin by proving that  $\B_m$  converges weakly to $0$ in
$H^2_1(M)$ and thus does $w_m$.\\
Take a function $\varphi\in\C^{\infty}(M)$, then we have
\begin{eqnarray*}
&\int_{B(p,2r)}\left(\nabla \B_m\nabla\varphi+
\B_m\varphi\right)dv_g&\\&=
R_m^{\frac{n}{2}}\int_{B(2r R_m^{-1})}[R_m
v(x)(\nabla\eta_r)(R_m x)+\eta_r(R_m x)\nabla
v]\nabla \varphi(\exp_p(R_m x))
dv_{\hat{g}_m}&\\&+R_m^{\frac{n+2}{2}}\int_{B(2r
R_m^{-1})} v\eta_r(R_m x)\varphi(\exp_p(R_m
x))dv_{\hat{g}_m},&
\end{eqnarray*}
then, for a positive constant $C'$ such that $dv_{\hat{g}_m}\le
C'dx$, it follows that
\begin{eqnarray*}
&\int_{B(p,2r)}\left(\nabla \B_m\nabla\varphi+
\B_m\varphi\right)dv_g\\&\le
C'R_m^{\frac{n}{2}}[\sup_M|\nabla\varphi|\int_{\R^n}(|\nabla
v|+|v|Cr^{-1})dx+ R_m\sup_M|\varphi|\int_{\R^n}|v|)dx].&
\end{eqnarray*}
 Thus, when tending $m\to \infty$, we ge that $\B_m\to0 $ weakly in $H^2_1(M)$.\\
Now, let us evaluate $J_h(w_m)$. First, we have
\begin{eqnarray*}
\int_{M}|\nabla w_m|^2dv_g&=&\int_{M\setminus
B(p,2r)}|\nabla v_m|^2dv_g+\int_{B(p,2r)}|\nabla
(v_m-\B_m)|^2dv_g,
\end{eqnarray*}
and of course
\begin{eqnarray*}
  &&\int_{B(p,2r)}|\nabla
(v_m-\B_m)|^2dv_g\\ &=& \int_{B(p,2r)}|\nabla
v_m|^2dv_g-2\int_{B(p,2r)}\nabla v_m\nabla\B_m
dv_g+\int_{B(p,2r)}|\nabla \B_m|^2dv_g.
\end{eqnarray*}
Direct calculation gives
\begin{eqnarray*}
&\\&\int_{B(p,2r)}|\nabla \B_m|^2dv_g=\int_{B( 2r
R_m^{-1})}\eta^2_r(R_m x)|\nabla
v|^2dv_{\hat{g}_m}+&\\&R_m^2\int_{B( 2r
R_m^{-1})}v^2|\nabla\eta_{r }|^2(R_m
x)dv_{\hat{g}_m}+2R_m\nabla\eta_{r }(R_m
x)\nabla vdv_{\hat{g}_m}.&
\end{eqnarray*}
It can be easily seen that the second term of right-hand side member
of the above equality tends to $0$ as $m\to \infty $.
Furthermore,  for $R>0$, a positive constant, we write
\begin{equation*}
\int_{B( 2r R_m^{-1})}\eta^2_r(R_m x)|\nabla
v|^2dv_{\hat{g}_m}=\int_{B( R)}\eta^2_r(R_m x)|\nabla
v|^2dv_{\hat{g}_m}+\int_{\mathbb{R}^n\setminus
B(R)}\eta^2_r(R_m x)|\nabla v|^2dv_{\hat{g}_m}.
\end{equation*}
 with
\begin{equation*}
\int_{\mathbb{R}^n\setminus B(R)}\eta^2_r(R_m x)|\nabla
v|^2dv_{\hat{g}_m}\le C \int_{\mathbb{R}^n\setminus
B(R)}|\nabla v|^2dx=\varepsilon_R,
\end{equation*}
where $\varepsilon_R$ is a function in $R$ such that $ \varepsilon_R\to 0 $ as $R\to\infty$.\\
Noting that, that ${\hat{g}_m}$ goes locally in $C^1$ to the
Euclidean metric $\xi$, we get then
\begin{equation}
\int_{B(p,2r)}|\nabla
\B_m|^2dv_g=\int_{\mathbb{R}^n}|\nabla
v|^2dx+o(1)+\varepsilon_R.
\end{equation}\label{3.15}
Moreover, we have
\begin{eqnarray}\label{3.16}
&\int_{B(p,2r)}\nabla v_m\nabla\B_m
dv_g=\int_{B(2r R_m^{-1})}\nabla
\tilde{v}_m\nabla vdv_{\hat{g}_m}
&\\&\nonumber+R_m\int_{B(2r R_m^{-1})}(R_mv\nabla
\hat{v}_m -\tilde{v}_m\nabla v)(\nabla\eta_{r})(R_m
x) dv_{\hat{g}_m},&
\end{eqnarray}
with $ \tilde{v}_m=R_m^{}\frac{n-2}{2}v_m(\exp_p(R_mx))$.\\
We have
\begin{eqnarray*}
 && |\int_{B(2r
R_m^{-1})}(\nabla\eta_r)(R_m x)(R_mv\nabla \tilde{v}_m
-\tilde{v}_m\nabla v)dv_{\hat{g}_m}|\\
 &\le&
cr^{-1}\left[\right.R_m\int_{B(2r R_m^{-1})} |\nabla
\tilde{v}_m|^2 dv_
{\hat{g}_m})^{\frac{1}{2}}(\int_{B(2r R_m^{-1})} v^2
dx)^{\frac{1}{2}}\\  &+&(\int_{B(2r
R_m^{-1})}\tilde{v}_m^2
dv_{\hat{g}_m})^{\frac{1}{2}}(\int_{B(2r R_m^{-1}))}
|\nabla v|^2 dx)^{\frac{1}{2}}\left.\right].
\end{eqnarray*}
Since $v_m$ is bounded in $H^2_1(M)$, the quantities
$\int_{B(2r R_m^{-1})} |\nabla \hat{v}_m|^2 dv_
{\hat{g}_m}$ and$\int_{B(2r R_m^{-1})} |
\hat{v}_m|^2 dv_ {\hat{g}_m}$ are bounded and hence the
second term of the right-hand side member of \eqref{3.16} is $o(1)$.
Thus, by using the weak convergence of
$\hat{v}_m$ to $v$ in $D^{1,2}(\R^n)$ that
\begin{equation*}
\int_{B(p,r)}\nabla v_m\nabla\B_m
dv_g=\int_{\mathbb{R}^n}|\nabla v|^2dx+o(1).
\end{equation*}
so that
\begin{equation*}
\int_{M}|\nabla w_m|^2dv_g=\int_{M}|\nabla
v_m|^2dv_g-\int_{\mathbb{R}^n}|\nabla
v|^2dx+o(1)+\varepsilon_R.
\end{equation*}
In similar way, for $R$ a positive constant and $m$ large, we
write
\begin{equation*}
\int_{B(p,2r)}\frac{h}{\rho_p^2}\B_m^2dv_g=
\int_{B(p,RR_m)}\frac{h}{\rho_p^2}\B_m^2dv_g+
\int_{B(p,2r)\setminus
B(p,RR_m)}\frac{h}{\rho_p^2}\B_m^2dv_g
\end{equation*}
with
\begin{eqnarray*}
\int_{B(p,2r)\setminus
B(p,RR_m)}\frac{h}{\rho_p^2}\B_m^2dv_g&\le& C
(RR_m)^{-2}\int_{B(p,2r)\setminus
B(p,RR_m)}\B_m^2dv_g\\&=&C
R^{-2}\int_{\R^n\setminus B(R)}v^2dx=\varepsilon_R
\end{eqnarray*}
with $\varepsilon_R\to0$ as $R\to\infty$.\\
Hence, when letting $m\to\infty$ and $R\to\infty$, we get
\begin{eqnarray*}
\int_{B(p,2r)}\frac{h}{\rho_p^2}\B_m^2&=&
R_m^2\int_{B(R)} \frac{h(\exp_p(R_m
x))}{(dist_{\hat{g}_m}(0,R_m x))^2}
\eta^2_r(R_m x) v^2dv_{\hat{g}_m}+\varepsilon_R\\
   &=&h(p)\int_{\mathbb{R}^n}\frac{v^2}{|x|^2}dx+o(1).
\end{eqnarray*}
Also, in similar way, since $v_m$ is bounded in $H^2_1(M)$,
after using Hölder and Hardy inequalities, we can easily have
\begin{equation*}
\int_{B(p,2r)\setminus
B(p,RR_m)}\frac{h}{\rho_p^2}v_m\B_m dv_g\le C
R^{-2}\int_{\R^n\setminus B(R)}v^2dv_g=\varepsilon_R,
\end{equation*}
which yields
\begin{eqnarray*}
\int_{B(p,r)}\frac{h}{\rho_p^2}v_m\B_m dv_g&=&
R_m^2\int_{B(R)} \frac{h(\exp_p(R_m
x))}{(dist_{\hat{g}_m}(0,R_m x))^2}
\hat{v}_m vdv_{\hat{g}_m}+\varepsilon_R\\
   &=&h(p)\int_{\mathbb{R}^n}\frac{v^2}{|x|^2}dx+o(1).
\end{eqnarray*}
so that in the end we obtain
\begin{equation*}
\int_M\frac{h}{\rho_p^2}w_m^2dv_g=
\int_M\frac{h}{\rho_p^2}v_m^2dv_g-h(p)\int_{\mathbb{R}^n}\frac{v^2}{|x|^2}dx+o(1).
\end{equation*}
In similar way, we can prove that
\begin{equation*}
\int_M|w_m|^{2^*}dv_g=\int_M|v_m|^{2^*}dv_g-\int_M|v|^{2^*}dx+o(1),
\end{equation*}
Finally, summing up all the calculations, we obtain
\begin{equation*}
J_h(w_m)=J_m(v_m)-J_\infty(v)+o(1).
\end{equation*}
It remains to prove that  $DJ_h(\B_m)\to 0$ in
$H_1^2(M)'$. Let $\varphi\in
 H_1^2(M)$, for $x\in B(r R_m^{-1})$ put $\varphi_m(x)=R_m^{\frac{n-2}{2}}\varphi(\exp_p(R_m
x))$ and $\overline{\varphi}_m(x)=\eta_r(R_m
x))\varphi_m (x)$, then we have
\begin{eqnarray*}
\int_{B(p,2r)} \nabla \B_m\nabla\varphi
dv_g=\int_{B(2r R_m^{-1})} \nabla
v\nabla\overline{\varphi}_m
dv_{\hat{g}_m}&&\\+R_m\int_{B(2r R_m^{-1})}
\nabla\eta_r(R_m
x)(v\nabla\varphi_m-\varphi_m\nabla v)dv_{\hat{g}_m}.
  \end{eqnarray*}
Knowing that $\int_{B(p,2r)} |\nabla\varphi|^2
dv_g=\int_{B(2r R_m^{-1})}  |\nabla\varphi_m|^2
dv_{\hat{g}_m}$, we get that
\begin{equation*}
\int_{B(2r R_m^{-1})}| \nabla\eta_r(R_m
x)(v\nabla\varphi_m-\varphi_m\nabla
v)|dv_{\hat{g}_m}\le C||\varphi||_{H^2_1(M)},
\end{equation*}
which gives that
\begin{equation*}
\int_{B(p,2r)} \nabla \B_m\nabla\varphi
dv_g=\int_{B(2r R_m^{-1})} \nabla
v\nabla\overline{\varphi}_m dv_{\hat{g}_m}+o(||\varphi
||_{H_1^2(M)}).
\end{equation*}
Next, for $R>0$ write
\begin{equation*}
\int_{B(2r R_m^{-1})} \nabla
v\nabla\overline{\varphi}_m dv_{\hat{g}_m}=\int_{B(R)}
\nabla v\nabla\overline{\varphi}_m
dv_{\hat{g}_m}+\int_{B(2r R_m^{-1})\setminus B(R)}
\nabla v\nabla\overline{\varphi}_m dv_{\hat{g}_m},
\end{equation*}
note that
\begin{eqnarray*}
  \int_{B(2r R_m^{-1})\setminus B(R)}
\nabla v\nabla\overline{\varphi}_m dv_{\hat{g}_m}&\le&
C||\varphi||_{H^2_1(M)}(\int_{B(2r R_m^{-1})\setminus
B(R)} |\nabla v|^2 dx)^\frac{1}{2}\\&=& O(||\varphi||_{H^2_1(M)}
)\varepsilon(R),
\end{eqnarray*}
where  $\varepsilon_R\to 0$ as $R\to \infty$. Since the sequence of
metrics ${\hat{g}_m} $ tends locally in $C^1$ when
$m\to\infty$ to the Euclidean metric, we obtain
\begin{equation*}
\int_{B(p,2r)} \nabla \B_m\nabla\varphi dv_g=\int_{\R^n}
\nabla v\nabla\overline{\varphi}_m dx+o(||\varphi
||_{H_1^2(M)})+O(||\varphi||_{H^2_1(M)} )\varepsilon(R).
\end{equation*}
Moreover, for a given  $R>0$, we have for $m$ large,
\begin{equation*}
\int_{B(p,2r)}\frac{h}{\rho_p^2}\B_m\varphi dv_g=
\int_{B(p,RR_m)}\frac{h}{\rho_p^2}\B_m\varphi
dv_g+\int_{B(p,2r)\setminus
B(p,RR_m)}\frac{h}{\rho_p^2}\B_m\varphi dv_g.
\end{equation*}
 On the one hand, we have
\begin{equation*}
\int_{B(p,2r)\setminus
B_p(RR_m)}\frac{h}{\rho_p^2}\B_m\varphi dv_g\le
\frac{C}{(RR_m)^2}||\varphi||_{H_1^2(M)}\int_{B(p,2r)\setminus
B(p,RR_m)}\B_m^2dv_g,
\end{equation*}
and a straightforward computation  shows that
\begin{equation*}
\int_{B(p,2r)\setminus B(p,RR_m)}|\B_m|^2dv_g\le C
R_m^2\int_{B(2r R_m^{-1})\setminus B(R)}v^2dx,
\end{equation*}
  which implies that
\begin{equation*}
   \int_{B(p,2r)\setminus
B(p,RR_m)}\frac{h}{\rho_p^2}\B_m\varphi dv_g=O(
||\varphi||_{H_1^2(M)})\varepsilon_R
\end{equation*}
with
$\varepsilon_R \to 0$ as $R\to \infty$. \\
On the other hand,  we have
\begin{equation*}
\int_{B(p,RR_m)}\frac{h}{\rho_p^2}\B_m\varphi dv_g=
R_m^{2}\int_{B(R)}\frac{h(\exp_pR_m
x)}{(dist_{\hat{g}_m}(0,R_m
x))^2}v\overline{\varphi}dv_{\hat{g}}.
\end{equation*}
which leads to
\begin{eqnarray*}
  \int_{B_p(RR_m)}\frac{h}{\rho_p^2}\B_m\varphi
dv_g &=&\int_{B(R)}\frac{h(p)
}{|x|^2}v\overline{\varphi}dx+o(||\varphi ||_{H_1^2(M)}) \\
   &=&\int_{\mathbb{R}^n}\frac{h(p)
}{|x|^2}v\overline{\varphi}dx-\int_{\mathbb{R}^n\setminus
B(R)}\frac{h(p) }{|x|^2}v\overline{\varphi}dx +o(||\varphi
||_{H_1^2(M)}),
\end{eqnarray*}
with
\begin{eqnarray*}
\int_{\mathbb{R}^n\setminus B(R)}\frac{h(p)
}{|x|^2}v\overline{\varphi}dx&\le&
\frac{C}{R^2}||\varphi||_{H_1^2(M)}\\ &=&O(
||\varphi||_{H_1^2(M)})\varepsilon_R.
\end{eqnarray*}
so that \begin{equation*}
   \int_{B(p,2r)}\frac{h}{\rho_p^2}\B_m\varphi dv_g= \int_{\mathbb{R}^n}\frac{h(p)
}{|x|^2}v\overline{\varphi}dx+o(||\varphi ||_{H_1^2(M)})+O(
||\varphi||_{H_1^2(M)})\varepsilon_R.
\end{equation*}
In the same way, we can also have
\begin{equation*}
    \int_{B(p,2r)}|\B_m|^{\frac{4}{n-2}}\B_m\varphi dv_g =
\int_{\mathbb{R}^n}|v|^{\frac{4}{n-2}}v\overline{\varphi}_m
dx+o(||\varphi ||_{H_1^2(M)}) +O(||\varphi
||_{H_1^2(M)})\varepsilon_R.
\end{equation*}
Summing up, we obtain
\begin{eqnarray*}
&\int_{B(p,2r)}(\nabla\B_m\nabla \varphi
dv_g+\frac{h}{\rho_p^2}\B_m\varphi)dv_g-
\int_{B(p,2r)}f|\B_m|^{\frac{4}{n-2}}\B_m\varphi
dv_g& \\&= \int_{\R^n}( \nabla v\nabla\overline{\varphi}_m
dx+\frac{h(p) }{|x|^2}v\overline{\varphi}_m)dx-
  f(p)\int_{\mathbb{R}^n}|v|^{\frac{4}{n-2}}v\overline{\varphi}_m dx&\\&+o(||\varphi
||_{H_1^2(M)}) +O(||\varphi ||_{H_1^2(M)})\varepsilon_R,&
\end{eqnarray*}
and since $v$ is  weak solution of \eqref{0.1}, we get the desired
result.
\end{proof}
\begin{Lemma}\label{lem3.5} Let $v_m$ be a Palais-Smale sequence for $J_h$.
Suppose that the sequence
$\hat{v}_m$ of the above lemma
converges weakly to $0$ in $D^{1,2}(\R^n)$. Then, there exist a
sequence of positive numbers $\{\t_m\},\t_m\to 0$ and a
sequence of points $x_i\in M, x_i\to x_o\neq p$ such
that up to a subsequence, the sequence
\begin{equation*}
 \nu_m=\t_m^{\frac{n-2}{2}}\eta_r(\t_mx)v_m(\exp_{x_i}(\t_m x ))
\end{equation*}
 converges weakly to a nontrivial weak solution $\nu$ of the Euclidean
equation \eqref{0.2} and the sequence
\begin{equation*}
    \W_m=v_m-\t_m^{\frac{2-n}{2}}\eta_r(\exp_{x_i}^{-1}(x))\nu(\t_m^{-1}\exp_{x_i}^{-1}(x))
\end{equation*}
is a Palais-Smale sequence for $J_h $ that converges weakly to $0$ in
$H^1_2(M) $ and
\begin{equation*}
J_h(\W_m)=J_h(v_m)-J(\nu).
\end{equation*}
\end{Lemma}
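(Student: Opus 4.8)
The plan is to run the classical Struwe-type concentration argument, in the form used in \cite{Druet-hebbey-robert}, adapted to $J_h$, using the hypothesis $\hat v_m\weakly 0$ to locate the new blow-up point away from $p$. First I would keep the small constant $\gamma>0$ of Lemma \ref{lem3.3} (small enough to lie below the energy of a standard Sobolev bubble). Since $v_m\weakly 0$ but not strongly, Lemma \ref{lem3.2} gives $\limsup_m\int_M|\nabla v_m|^2dv_g>0$, and one may assume this $\limsup$ exceeds $\gamma$. For each $m$, let $\t_m>0$ be the smallest radius for which $\sup_{x\in M}\int_{B(x,\t_m)}|\nabla v_m|^2dv_g=\gamma$, attained at some $x_m\in M$ (which, up to a subsequence, converges to $x_o$). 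As in Lemma \ref{lem3.3}, the strong $L_2(M)$-convergence of $v_m$ together with this normalization forces $\t_m\to 0$: if not, a finite covering of $M$ by balls of a fixed radius, the localized Sobolev inequality \eqref{1.1}, the smallness of $\gamma$, and testing $DJ_h(v_m)$ against $\eta^2v_m$ would give $\int_M|v_m|^{2^*}dv_g\to 0$, hence $\int_M|\nabla v_m|^2dv_g\to 0$ by the Palais--Smale relation, contradicting non-strong convergence.

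The hard part is showing $x_o\neq p$, and this is where $\hat v_m\weakly 0$ is used; I expect it to be the main obstacle. One compares $\t_m$ with the radius $R_m$ of Lemma \ref{lem3.3}. By the minimality defining $r_m$ there, $\t_m\le r_m$, and $R_m$ is comparable to $r_m$. Should $x_m\to p$, the bubble about to be extracted at $(x_m,\t_m)$ would sit inside the window seen by $\hat v_m$; rescaling it back in the $R_m$-variable would then show that $\hat v_m$ converges weakly to a nonzero function, contradicting the hypothesis. Hence $x_o\neq p$, and therefore $\rho_p\ge c_0>0$ on $B(x_m,2r)$ for $r$ small and $m$ large — the decisive gain that makes the Hardy term inert below.

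Next I would rescale: put $g_m(x)=(\exp_{x_m}^*g)(\t_m x)$, which tends to the Euclidean metric $\xi$ in $C^1$ on compact sets, and $\nu_m(x)=\t_m^{\frac{n-2}{2}}\eta_r(\t_m x)v_m(\exp_{x_m}(\t_m x))$; exactly as in Lemma \ref{lem3.3}, $\nu_m$ is bounded in $D^{1,2}(\R^n)$, so $\nu_m\weakly\nu$ up to a subsequence. To identify the limit equation, insert the test functions $\varphi_m(x)=\t_m^{\frac{2-n}{2}}\varphi(\t_m^{-1}\exp_{x_m}^{-1}(x))$, $\varphi\in\C^\infty_o(\R^n)$ (bounded in $H^2_1(M)$), into $DJ_h(v_m)\varphi_m\to 0$: the gradient term tends to $\int_{\R^n}\nabla\nu\nabla\varphi\,dx$, the nonlinearity to $\int_{\R^n}|\nu|^{2^*-2}\nu\varphi\,dx$, while the singular term equals
\[
\t_m^2\int_{\R^n}\frac{h(\exp_{x_m}(\t_m x))}{\rho_p(\exp_{x_m}(\t_m x))^2}\,\nu_m\varphi\,dv_{g_m},
\]
whose integrand stays bounded (since $\exp_{x_m}(\t_m x)\to x_o\neq p$ on the fixed support of $\varphi$) while the prefactor $\t_m^2\to 0$; hence it drops out and $\nu$ solves \eqref{0.2}. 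For nontriviality: $\int_{B(x_m,\t_m)}|\nabla v_m|^2dv_g=\gamma$ rescales to $\int_{B(0,1)}|\nabla_{g_m}\nu_m|^2dv_{g_m}=\gamma$, while minimality of $\t_m$ bounds $\int_{B(x,1)}|\nabla_{g_m}\nu_m|^2dv_{g_m}$ uniformly by $\gamma$; for $\gamma$ small this smallness feeds an $\varepsilon$-regularity (Moser iteration) argument giving strong convergence of $\nu_m$ in $L_{2^*}$ on every compact set, so that $\nu=0$ would force $\int_{B(0,1)}|\nabla_{g_m}\nu_m|^2dv_{g_m}\to 0$ (test the rescaled equation against $\eta^2\nu_m$) — a contradiction, whence $\nu\neq 0$.

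Finally I would subtract the bubble. With $\B_m(x)=\t_m^{\frac{2-n}{2}}\eta_r(\exp_{x_m}^{-1}(x))\nu(\t_m^{-1}\exp_{x_m}^{-1}(x))$ and $\W_m=v_m-\B_m$, concentration of $\B_m$ at $x_m\to x_o$ gives $\B_m\weakly 0$, hence $\W_m\weakly 0$ in $H^2_1(M)$. The remaining estimates are a verbatim rerun of Lemma \ref{lem3.4} carried out around $x_m$ instead of $p$, with one simplification: since $\operatorname{supp}\B_m\subset B(x_m,2r)$ lies at positive distance from $p$, one has, using the usual $\varepsilon_R$-cutoff at radius $R\t_m$ and $\nu\in L_{2^*}(\R^n)$,
\[
\int_M\frac{h}{\rho_p^2}\B_m^2\,dv_g\le C\int_M\B_m^2\,dv_g=o(1),\qquad \int_M\frac{h}{\rho_p^2}v_m\B_m\,dv_g=o(1),
\]
together with $\int_M|\nabla\W_m|^2dv_g=\int_M|\nabla v_m|^2dv_g-\int_{\R^n}|\nabla\nu|^2dx+o(1)$ and the Brezis--Lieb identity for the $L_{2^*}$ terms; adding these up yields $J_h(\W_m)=J_h(v_m)-J(\nu)+o(1)$ — it is $J$, not $J_\infty$, precisely because the Hardy contribution vanished. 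The same computations with a general rescaled test function, together with the fact that $\nu$ solves \eqref{0.2}, give $DJ_h(\B_m)\to 0$ in $H^2_1(M)'$, whence $DJ_h(\W_m)=DJ_h(v_m)-DJ_h(\B_m)+o(1)=o(1)$, so $\W_m$ is Palais--Smale for $J_h$. I expect the separation $x_o\neq p$ in the second step to be the only genuinely delicate point; once it is secured, everything reduces to the arguments of Lemmas \ref{lem3.3}--\ref{lem3.4} with the Hardy terms neutralized by the factor $\t_m^2$.
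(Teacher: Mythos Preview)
Your overall strategy matches the paper's: locate a concentration point by maximizing $\int_{B(x,t)}|\nabla v_m|^2\,dv_g$ over $x\in M$, show the radius tends to $0$, show the limit point is not $p$, rescale there to extract a nontrivial solution of \eqref{0.2} (the Hardy term drops out via the prefactor $\tau_m^2$ once $x_o\neq p$), and subtract the bubble exactly as in Lemma~\ref{lem3.4}. The identification of the limit equation, the nontriviality argument via a local smallness/$\varepsilon$-regularity computation, and the bubble subtraction are all correct and essentially identical to the paper.

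The gap is precisely where you flag it: your argument for $x_o\neq p$. The heuristic ``if $x_m\to p$, rescaling back in the $R_m$-variable would give a nonzero weak limit for $\hat v_m$'' does not work as stated. Even with $\tau_m\le r_m$, the new concentration could sit at a scale with $\tau_m/R_m\to 0$, or at a position with $\mathrm{dist}_g(x_m,p)/R_m\to\infty$; in either case the weak limit of $\hat v_m$ (which is taken at the fixed scale $R_m$ centered at $p$) remains $0$, and no contradiction arises. The paper's mechanism is different and more explicit. From $\hat v_m\rightharpoonup 0$ (hence $\hat v_m\to 0$ in $L^2_{\mathrm{loc}}$) it first derives, by testing $DJ_h(v_m)$ against $\varphi_m^2 v_m$ with $\varphi$ a fixed cutoff in the $R_m$-rescaled coordinates and using the Hardy and Sobolev inequalities together with the smallness of $\gamma$, that
\[
\int_{B(p,tR_m)}|\nabla v_m|^2\,dv_g\longrightarrow 0\quad\text{for every fixed }t<C_o r
\]
(this is the computation \eqref{3.20} leading to \eqref{4.20}). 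It then selects the new concentration at a threshold $\lambda$ \emph{strictly smaller} than $\gamma$ --- not at $\gamma$ as you do --- which forces the new radius to satisfy $t_m<r_m$; combined with the preceding energy decay around $p$, this rules out $x_m\to p$. Replace your heuristic by this two-step argument (local energy vanishing around $p$ at scale $R_m$, then threshold $\lambda<\gamma$), and the rest of your sketch goes through as written.
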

\begin{proof}  Take a function $\varphi\in
\C^{\infty}_o(B(C_or))$ and put
$\varphi_m(x)=\varphi(R_m^{-1}\exp_p^{-1}(x)) $. By the strong convergence of
$\hat{v}_m$ to $0$ in $L_{loc}^2(\R^n)$, we have for $m$
large
\begin{eqnarray}\label{3.20}
 &&\int_{\R^n}|\nabla( \hat{v}_m\varphi)|^2dv_{\hat{g}_m}=\int_{\R^n}\nabla
\hat{v}_m\nabla(\hat{v}_m\varphi^2)dv_{\hat{g}_m}+o(1)
 \\ &=&\int_{M}\nabla v_m\nabla
(v_m\varphi_m^2)dv_{g}+o(1)\nonumber\\ &=&\|\mathcal{D}
J_h\| \|{v_m\varphi_m^2}\|+
  \int_{M}\frac{h}{\rho_p^2}(v_m\varphi_m)^2dv_g+
  \int_{M}|v_m|^{\frac{4}{n-2}}(v_m\varphi_m)^2dv_g+o(1)\nonumber\\
   &\le&(h(p)+\varepsilon)(K^2(n,2,-2)+\varepsilon)\int_{\R^n}|\nabla
  (\tilde{v}_m\varphi)|^2dv_{\hat{g}_m}+\nonumber\\ && K^{2^*}(n,2)(\int_{B(C_or)}|\nabla \hat{v}_m|^2dv_{\hat{g}_m})^{\frac{2}{n-2}}\int_{\R^n}|\nabla
   (\hat{v}_m\varphi)|^2dv_{\hat{g}_m}+o(1)\nonumber.
\end{eqnarray}
Thus, for $\gamma$ (in \eqref{3.11}) chosen small enough,  we get that for each
$t,0<t<C_or,$
\begin{equation}\label{4.20}
\int_{B(p,tR_m)}|\nabla v_m|^2dv_g=\int_{B(t)}|\nabla
\tilde{v}_m|^2dv_{\hat{g}}\to0, \text{ as }m\to \infty.
\end{equation}
Now, for $t>0$ consider the function
\begin{equation*}
t\longrightarrow\F(t)=\max_{x\in M}\int_{B(x,t)}|\nabla
v_m|^2dv_g.
\end{equation*}
Since $\F $ is continuous, under \eqref{eq3.6} and \eqref{eq3.7}, it
follows that for any $\lambda\in(0,\gamma)$, there exist
$t_m>0$ small and $x_m\in M$ such that
\begin{equation}\label{3.21}
\int_{B(x_m,t_m)}|\nabla v_m|^2dv_g=\lambda.
\end{equation}
Since $M$ is compact, up to a subsequence, we may assume that
$x_m$ converges to some point $x_o\in M$.\\
Note first that for all  $m\ge0$,
$t_{m}<r_{m}=C_orR_{m}$, otherwise if there exists
$m_o\ge 0$ such that $t_{m_o}\ge r_{m_o}$, we get a
contradiction due to the fact that
\begin{equation*}
\lambda= \int_{B(x_{m_o},t_{m_o})}|\nabla
v_{m_o}|^2dv_g\ge\int_{B(p,t_{m_o})}|\nabla
v_{m_o}|^2dv_g\ge\int_{B(p,r_{m_o})}|\nabla
v_{m_o}|^2dv_g =\gamma
\end{equation*}
and $\lambda$ is chosen such that $0<\lambda<\gamma$. Since $r_m\to0$, it follows that $t_m\to0$ as $m\to\infty$. \\ Now, suppose that for all $\varepsilon>0$, there exists
$m_\varepsilon>0$ such that $dist_g(x_m,p)\le\varepsilon$
 for all $m\ge m_\varepsilon$. Choose $r'_m$ such that,  $t_{m}<r'_m<r_{m} $ and take $\varepsilon'
=r'_m-t_m$, we get that for some $m_{\varepsilon'}>0$
and $m\ge m_{\varepsilon'}$
\begin{equation*}
 B(x_m, t_m)\subset B(p,r'_m),
\end{equation*}
which, by virtue of \eqref{4.20} and \eqref{3.21}, is impossible. We deduce then that
$x_o\neq p$. \\
For $y$ and $z\in B(\frac{\delta_g}{2})$ we can find a positive constant $C_1$ such that  for all $x\in M$
\begin{equation}\label{}
dist_g(\exp_x(y),\exp_x(z)\le C_1|y-z|.
\end{equation}
Take $\t_m$ such that $C_1r\t_m=t_m$. Then, for $a\in\R^n$ and $r>0$ a constant such that $|a|+r<\tau^{-1}\delta_g$, we have
\begin{equation}\label{3.22}
 \exp_{x_m}(\t_m B(a,r))\subset B(\exp_{xm}(\t_m
a),C_1r\t_m),
\end{equation}
and
\begin{equation*}
   \exp_{x_m}(\t_m B(C_1r))= B(x_m,C_1r\t_m).
\end{equation*}
For $x \in B(\t_m^{-1}\delta_g)\subset\mathbb{R}^n $ consider the
sequences
\begin{eqnarray*}
  \nu_m(x) &=&\t_m^{\frac{n-2}{2}}\eta_{r}(\t_m x)v_m(\exp_{x_m}(\t_m
  x)), \text{ and }
   \\
 \tilde{g}_m(x) &=& \exp_{x_m}^*g(\t_m x)).
\end{eqnarray*}
 As in the proof of the above
lemma, we can easily check that there is a subsequence of
$\nu_m$ that weakly converges  in
$\dob(\R^n)$ to some function $\nu$, a weak solution on $\dob(\R^n)$
to \eqref{0.2}. Note that this time the
singular term disappears because $x_o\neq p$  and because of course $t_m\to0$.\\
It remains to show that $\nu\neq0$. For this purpose, take a point
$a\in\R^n$ and a constant $r>0$ such that $|a|+r<\delta_g\t_m^{-1}$.
Since we have
\begin{equation*}
\int_{B(a,r)}|\nabla
\nu_m|^2dv_{\tilde{g}_m}=\int_{\exp_{x_m}(\t_m
B(a,r))}|\nabla v_m|^2dv_{g},
\end{equation*}
we get by construction of $x_m$ and \eqref{3.22} that for such $a$ and $r$ ,
\begin{equation*}
\int_{B(a,r)}|\nabla \nu_m|^2dv_{\tilde{g}}\le\lambda.
\end{equation*}
Suppose now that $\nu\equiv0$. Take any function $\varphi\in \dob(\R^n)$
with support included in a ball $B(a,r)\subset\R^n$, with $a$ and
$r$ as above. Then, by taking $\lambda$ small enough, we get by the
same calculation done in \eqref{3.20} that $\int_{B(a,r)}\nabla
\nu_m dv_{\tilde{g}}$ converges to $0$ for all $a\in\R^n$
and $r>0$ such that  $|a|+r<\t_m^{-1}\delta_g$. In particular, for $r$ small such that $C_1r<\t_m^{-1}\delta_g$, we get
\begin{equation*}
\int_{B(x_m,t_m)}|\nabla
v_m|^2dv_{g}=\int_{B(C_1r)}|\nabla
\nu_m|^2dv_{\tilde{g}}\to0,
\end{equation*}
which makes a contradiction. Thus $\nu\neq0$.\\
The proof of the remaining statements of the lemma goes in the same
way as in lemma \ref{lem3.5}.
\end{proof}
\begin{proof}[Proof of theorem \ref{thm3.6}] Let $u_m$ be a Palais-Smale sequence for $J_h$ at level $\beta$. It can easily seen that  $u_m$ is bounded in $H^2_1(M)$, then  up to a subsequence it converges to a function $u$ weakly in $H^2_1(M),L_2(\rho_p^{-2},M)$, strongly in $L_q(M),q<2^{*}$ and almost everywhere to $u$ in $M$.\\
Thus, by Lemma \ref{lem2.1}, the function $u$ is weak solution of \eqref{0.1} and the sequence $v_m=u_m-u$ is a Palais-Smale sequence for $J_h$ at level $\beta-J_h(u)$. If $v_m$ converges strongly to 0 in $H^1_2(M)$, then the theorem is proved with $k=l=0$. If not,  then by lemma \ref{lem3.3}, there exists a sequence of reals $R_m\to 0$ such that, the sequence $\hat{v}_m$
\begin{equation*}
\hat{v}_m(x)=R_m^{\frac{n-2}{2}}\eta_r(R_m x)v_m(\exp_p(R_m x)), x\in \R^n
\end{equation*}
converges in $D^{1,2(\R^n)}$ to a solution $v$ of \eqref{0.3}.\\
 If $v\neq0$, then  by lemmas \ref{lem3.4} and \ref{lem2.1} the sequence
\begin{equation*}
v_{1,m}(x)=v_m(x)-R_m^{\frac{2-n}{2}}\eta_r(\exp_p^{-1}(x)v(R_m\exp_p^{-1}( x))
\end{equation*}
is a Palais-smale sequence for $J_h$ at level $\beta_1=\beta-J_\infty(u)-J(v)$ that converges weakly to $0$ in $H^1_2(M)$. If $v_{1,m} $ converges strongly, the theorem is proved with $k=1$ and $l=0$. If not, we repeat  to $v_{1,m}$ the procedure already applied to $v_m$ to obtain a Palais-Smale sequence $v_{2,m}$ at level $\beta_2<\beta_1$ that either converges strongly to $ 0$ in $H^1_2(M)$ and in this case the theorem is proved with $k=2, l=0$ or it converges only weakly, and in this case we repeat again the  procedure applied to $v_{1,m}$. We keep repeating this proceeding until we get a Palais-Smale sequence at level
\begin{equation*}
\beta_k<\beta^*=\frac{1-h(p)K(n,2,-2)^2}{nK(n,2)^n},
\end{equation*}
 and int his case, by lemma \ref{lem3.2} the Palais-Smale sequence associated to the level $\beta_k$ converges strongly to $0$.\\
If $v=0$, then by lemma \ref{lem3.5}, there exists a sequence of positive reals $\tau_m\to0$ and a sequence of points $x_i\to x_o\neq p$ such that the sequence
\begin{equation*}
    \hat{w}_m(x)=\tau_m^{\frac{n-2}{2}}\eta_r(\tau_m x)v_m(\exp_{x_i}(\tau_m x)), x\in \R^n
\end{equation*}
converges, up to a subsequence to a solution $\hat{v}\in D^{1,2(\R^n)}\setminus\{0\} $ of equation \eqref{0.2}
and the sequence
\begin{equation*}
\hat{w}_{m,1}(x)=v_m(x)-\tau_m^{\frac{2-n}{2}}\eta_r(\exp_{x_i}^{-1}(x)\hat{v}(\tau_m\exp_{x_o}^{-1}( x))
\end{equation*}
is a Palais-Smale sequence of the functional $J_h$ at lower level $\hat{\beta}_1=\beta-J(u)-G(\hat{v})$ that converges weakly to $0$ $ \in D^{1,2(\R^n)}$. If $\hat{\beta}_1<\beta^*$, then the sequence $\hat{w}_{1,m}(x)$ converges strongly and the theorem is proved with $k=0, \ell=1$, otherwise we apply the procedure from the beginning of the proof to $\hat{w}_{1,m}(x)$ to obtain a palais-Smale sequence $\hat{w}_{2,m}(x)$ at much lower level. We keep doing this procedure so on  until we obtain a Palais-Smale sequence  $\hat{w}_{\ell,m}(x)$ at level $\hat{\beta}_\ell<\beta^*$ that converges strongly to $0$  in  $H^2_1(M)$.
\end{proof}
As it will be shown in the following corollary, the conclusion of the above theorem is very useful in obtaining levels for which Palais-Smale sequence of $J_h$ converges to non zero critical points of the functional $J_h$. \\
First,  put
\begin{eqnarray}
&&\label{2.8}d^*=\frac{1}{nK(n,2)^n},\\&&\label{2.9} D^*=\frac{(1-K(n,2,-2)^2h(p))^{\frac{n-1}{2}}}{nK(n,2)^n},
\end{eqnarray}
By taking $\lambda=h(p)$ in equation \eqref{2.2}, we deduce, by \eqref{0.4}, that if $u$ is a constant sign solution, then
\begin{eqnarray*}
 J_\infty(u) =D^*.
\end{eqnarray*}
On the other hand, if $u$  changes sign, then
\begin{eqnarray*}
 J_\infty(u) >2D^*.
\end{eqnarray*}
In fact,  write $u=u^++u^-$, where $u^+=\max(u,0)$ and $v^-=\min(u,0)$. We then get
\begin{eqnarray*}
\int_{\R^n}\left(|\nabla u^+|^2-\frac{h(p)}{|x|^2}{u^{+}}^{2}\right)dx&=&\int_{\R^n}(\nabla u.\nabla u^+-\frac{h(p)}{|x|^2}uu^{+})dx\\&=&\int_{\R^n}|u|^{2^*-1}uu^+dx\\&=&\int_{\R^n}|u^+|^{2^*}dx
\end{eqnarray*}
Then, since $u^+$ cannot be one of the functions $U_\lambda$, where $U_\lambda is$ defined by \eqref{1.2'},  then by \eqref{0.4} we get
\begin{eqnarray*}
J_\infty(u^+)&=&\frac{1}{n}\int_{\R^n}\left(|\nabla u^+|^2-\frac{h(p)}{|x|^2}{u^{+}}^{2}\right)dx\\&>&\frac{(1-K(n,2,-2)^2h(p))^{\frac{n-1}{2}}}{nK(n,2)^n}=D^*
\end{eqnarray*}
By the same way, we get
\begin{equation*}
J_\infty(u^-)=\frac{1}{n}\int_{\R^n}\left(|\nabla u^-|^2-\frac{h(p)}{|x|^2}{u^{-}}^{2}\right)dx>D^*
\end{equation*}
Thus, we obtain
\begin{equation*}
    J_\infty(u)=J_\infty(u^+)+J_\infty(u^-)>2D^*.
\end{equation*}
Now, define
\begin{equation}
    \mu=\inf_{u\in H^2_1(M)\setminus\{0\}}\frac{\int_M(|\nabla u|^2-\frac{h}{\rho_p^2})dv_g}{(\int_M|u^{2^*}|dv_g)^{\frac{2}{2^*}}}.
\end{equation}
We prove the following corollary:
\begin{Corollary}\label{coro1} Let $u_m$ be a Palais-Smale sequence of $J_h$ at level $\beta$ such that
\begin{equation}\label{3.27}
0<\beta<D^*, \text{ where } D^* \text{ is defined by } \eqref{2.9}.
\end{equation}
Then, up to a subsequence, $u_m$ converges strongly in $H^2_1(M)$ to a function $u\neq0$ such that $DJ_h(u)=0$.\\
Moreover, suppose that
\begin{eqnarray}\label{3.28}
&&h(p)>0, 0<\left(1-h(p)K^{2}(n,2,-2)\right)^{\frac{n-1}{2}}<\frac{1}{2},\\&& \label{3.29}\text{ and  }\mu^{\frac{n}{2}}\ge nD^*.
\end{eqnarray}
Then, if
\begin{equation}\label{3.30'}
D^*<\beta<2D^*,
\end{equation}
 the sequence $u_m$ converges, up to a subsequence, strongly in $H^2_1(M)$ to a function $u\neq0$ such that $DJ_h(u)=0$.
\end{Corollary}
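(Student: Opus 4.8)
The plan is to feed the Palais--Smale sequence $u_m$ into the decomposition Theorem \ref{thm3.6} and then to exclude, by a purely arithmetic argument on the energy identity it provides, the survival of any bubble; once no bubble is left, the remainder term of the decomposition gives the strong convergence. First I would recall that $u_m$ is bounded in $H^2_1(M)$ (this follows, via the Hardy inequality \eqref{2.1} and $0<h(p)<\frac{1}{K^2(n,2,-2)}$, exactly as in the first part of the proof of Lemma \ref{lem2.1}), so along a subsequence $u_m\rightharpoonup u$ in $H^2_1(M)$ with $u$ a weak solution of \eqref{0.1}. Applying Theorem \ref{thm3.6} to $u_m$ then produces integers $k,\ell$, nontrivial solutions $v_i\in D^{1,2}(\R^n)$ of \eqref{0.3} and $\nu_j\in D^{1,2}(\R^n)$ of \eqref{0.2}, a remainder $\W_m\to0$ in $H^2_1(M)$, and the energy identity
\begin{equation*}
\beta=J_h(u)+\sum_{i=1}^{k}J_\infty(v_i)+\sum_{j=1}^{\ell}J(\nu_j).
\end{equation*}

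The next step is to record the elementary lower bounds on the three kinds of terms. Testing \eqref{0.1} with $u$ gives $J_h(u)=\frac1n\int_M|u|^{2^*}dv_g\ge0$, with equality only when $u\equiv0$; and if $u\not\equiv0$ the variational characterization of $\mu$ yields $\mu\le(\int_M|u|^{2^*}dv_g)^{2/n}$, hence $J_h(u)\ge\frac1n\mu^{n/2}$, which under \eqref{3.29} means $J_h(u)\ge D^*$. Testing \eqref{0.3} with $v_i$ and invoking the infimum identity \eqref{0.4} with $\lambda=h(p)$ gives $J_\infty(v_i)=\frac1n\int_{\R^n}|v_i|^{2^*}dx\ge D^*$, and from the paragraph preceding the corollary $J_\infty(v_i)=D^*$ exactly when $v_i$ has constant sign and $J_\infty(v_i)>2D^*$ otherwise. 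Similarly, testing \eqref{0.2} with $\nu_j$ and using the Euclidean Sobolev inequality gives $J(\nu_j)=\frac1n\int_{\R^n}|\nu_j|^{2^*}dx\ge d^*$ with $d^*$ as in \eqref{2.8}. Since $0<1-h(p)K^2(n,2,-2)<1$ we have $D^*<d^*$, and the hypothesis \eqref{3.28} is precisely the inequality $2D^*<d^*$.

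For the first assertion, assume \eqref{3.27}, i.e. $0<\beta<D^*$. If $k\ge1$ the energy identity forces $\beta\ge J_\infty(v_1)\ge D^*$, and if $\ell\ge1$ it forces $\beta\ge J(\nu_1)\ge d^*>D^*$; in either case $\beta\ge D^*$, a contradiction. Hence $k=\ell=0$, so $u_m=u+\W_m\to u$ strongly in $H^2_1(M)$ and $\beta=J_h(u)$. As $\beta>0$ this forces $u\not\equiv0$, and $DJ_h(u)=0$ because $u$ solves \eqref{0.1}.

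For the second assertion, add \eqref{3.28} and \eqref{3.29} and assume \eqref{3.30'}, i.e. $D^*<\beta<2D^*$. If $\ell\ge1$ then $\beta\ge d^*>2D^*$, impossible. If $k\ge2$ then $\beta\ge J_\infty(v_1)+J_\infty(v_2)\ge2D^*$, impossible. If $k=1$: when $u\equiv0$, $\beta=J_\infty(v_1)$ equals $D^*$ (if $v_1$ has constant sign) or exceeds $2D^*$, both contradicting $D^*<\beta<2D^*$; when $u\not\equiv0$, \eqref{3.29} gives $J_h(u)\ge D^*$, so $\beta=J_h(u)+J_\infty(v_1)\ge2D^*$, again impossible. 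Therefore $k=\ell=0$, whence $u_m\to u$ strongly in $H^2_1(M)$, $\beta=J_h(u)>D^*>0$, so $u\not\equiv0$ and $DJ_h(u)=0$. The one genuinely delicate point — and the only place where $h(p)>0$, \eqref{3.28} and \eqref{3.29} are all needed at once — is the exclusion of a single bubble concentrating at $p$ when $\beta$ lies in the window $(D^*,2D^*)$; everything else is bookkeeping on top of Theorem \ref{thm3.6}.
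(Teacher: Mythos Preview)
Your proof is correct and follows essentially the same strategy as the paper: apply the decomposition Theorem \ref{thm3.6} and use the energy identity together with the lower bounds $J_\infty(v_i)\ge D^*$ (with equality only for constant-sign $v_i$, and $>2D^*$ otherwise), $J(\nu_j)\ge d^*$, $J_h(u)\ge 0$, and $J_h(u)\ge \mu^{n/2}/n$ when $u\not\equiv0$, to rule out bubbles. The only difference is organizational: in the second part the paper re-invokes Lemmas \ref{lem3.3}--\ref{lem3.5} to peel off one bubble at a time and feeds the remainder back into the first part of the corollary, whereas you read everything off the full energy identity at once; your version is a little more direct but relies on exactly the same ingredients.
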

\begin{proof} By theorem \ref{thm3.6}, there is a critical point  $u$ of $J_h$, a sequence of reals, $R_m\to0,\tau_m\to0$, a sequence of points $x_j\to x_o\neq p$, a sequence of solutions $v_i$ of \eqref{0.3} and sequence of non trivial solutions  $\nu_j$ of \eqref{0.2} such that, up to a
subsequence of $u_m$, we have
\begin{eqnarray}\label{3.30}
\nonumber u_m&=&u+\sum_{i=1}^{k}(R^i_m)^{\frac{2-n}{n}}
\eta_r(\exp^{-1}_p(x))v_i((R_m^i)^{-1}\exp^{-1}_p(x))\\&+&\sum_{j=1}^{l}(\t^j_m)^{\frac{2-n}{n}}
\eta_r(\exp^{-1}_{x_m^j}(x))\nu_j((\t_m^j)^{-1}\exp^{-1}_{x_m^j}(x))+\W_m,\\&& \nonumber
\text{ with } \W_m\to 0 \text{ in }H^1_2(M),
\end{eqnarray}
and
\begin{eqnarray}\label{3.31}
\beta=J_h(u)+\sum_{i=1}^k
J_\infty(v_i)+\sum_{j=1}^l J(\nu_j)+o(1).
\end{eqnarray}
First, let  $\beta $ be such that
\begin{equation*}
0<\beta<D^*=\frac{(1-h(p)K^2(n,2,-2))^{\frac{n-1}{2}}}{nk(n,2)^n},
\end{equation*}
and suppose that $u\equiv0$. If there exists $i, 1\le i\le  k$ such that
$v_i\neq0$ and \eqref{3.30}, \eqref{3.31} hold, then by \eqref{0.4} we get
\begin{equation}\label{3.32}
\beta= J_\infty(v_i)\ge D^*.
\end{equation}
Thus, $\forall 1\le i\le k, v_i =0$. In the same way, if there exists $\nu_j$ such that \eqref{3.30} and  \eqref{3.31}, then
\begin{equation*}
    \beta= J(\nu_j)\ge d^*>D^*.
\end{equation*}
which also makes  a contradiction. Hence, $u\neq0$ which implies that $J_h(u)>0$ and thus, for the same reasons as above, the $u_m$ converges, up to a subsequence, to $u$ in $H^2_1(M)$.\\
For the second part of the corollary, let $\beta$ be such that $D^*<\beta<2D^*$, and suppose that $u\equiv0$. First, note that by the proof of theorem \ref{thm3.6}, $u\equiv0$ is nothing but the weak limit in $H^2_1(M)$ of $u_m$. Since $ \beta> D^*>0$, the sequence $u_m$ cannot converge strongly in $H_1^2(M)$ to $0$. Then, it follows from lemmas \ref{lem3.3} and \ref{lem3.4}  that there exists $v$ of \eqref{0.3} such that if $v\neq0$, the sequence
\begin{equation*}
    w_m=u_m-(R_m)^{\frac{2-n}{n}}
\eta_r(\exp^{-1}_p(x))v((R_m)^{-1}\exp^{-1}_p(x))
\end{equation*}
is a Palais-Smale sequence of $J_h$ that converges weakly to $0$ and
\begin{equation}\label{3.33}
    J_h(w_m)=\beta-J_\infty(v)+o(1).
\end{equation}
By \eqref{0.4},  since $D^*<\beta<2D^*$, either $v$ changes sing or not, $\beta\neq J_\infty(v)$. Hence,
\begin{equation*}
   0<\beta-J_\infty(v)<D^*
\end{equation*}
which implies by the first part of the corollary that the sequence $w_m$ converges  strongly to non zero function $w$ such that $DJ_h(w)=0$ which is impossible since, if it is the case, by \eqref{3.33} and \eqref{3.29}, it follows that
\begin{equation*}
\beta=J_h(w)+J_\infty(v)\ge\frac{\mu^{\frac{n}{2}}}{n}+D^*\ge2D^*,
\end{equation*} which is against \eqref{3.30'}. Hence, $v=0$.\\
On the other hand, by lemma \ref{lem3.5} there exists $\nu\neq0$ such that the sequence
\begin{eqnarray*}
    w_m&=&u_m-(\t_m)^{\frac{2-n}{n}}
\eta_r(\exp^{-1}_{x_m^j}(x))\nu((\t_m)^{-1}\exp^{-1}_{x_m}(x))
\end{eqnarray*}
is a Palais-Smale sequence that converges weakly to $0$ in $H^2_1(M)$ and
\begin{equation*}
   J_h(w_m) =\beta-J(\nu)+o(1).
\end{equation*}
Since $\nu$ is solution of \eqref{0.2}, then $J(\nu)\ge d^*=\frac{1}{nK^{n}(n,2)}$ which, under \eqref{3.28}, implies that
\begin{equation*}
J_h(w_m) =\beta-J(\nu)<0,
\end{equation*}
which also impossible since $w_n$ is a Palais-Smale sequence of $J_h$. Hence, the function $u$ under  conditions \eqref{3.28},\eqref{3.29} and \eqref{3.30} cannot be identically zero.\\
Now, suppose that there exists a solution $v_i\neq0$ of \eqref{0.3} such that \eqref{3.30} and \eqref{3.31}  hold,
 then
\begin{equation*}
    \beta=J_h(u)+J_\infty(v_i)\ge\frac{\mu^{\frac{n}{2}}}{n}+D^*\ge 2D^*.
\end{equation*}
Thus, $v_i\equiv0,\forall i,1\le i\le k$. Similarly, we obtain $\nu_j\equiv0,\forall j,1\le j\le l$. Hence, the sequence $u_m$ converges, up to a subsequence, strongly to $u\neq0$ in $H^2_1(M)$ with $DJ_h(u)=0$.
\end{proof}
\section{Existence of solutions}
As a consequence of  corollary \eqref{coro1}, we obtain the following existence result. We construct Palais-Smale sequences of the functional $ J_h$ at levels $\beta$ confined between the values given in corollary \ref{coro1}. In this way, we prove the following theorem
\begin{Theorem} \label{thm4.1}Let $M$ be a compact Riemannian manifold of dimension $n>\frac{2}{a}+2$, where $a$ is defined by \eqref{0.3}. Let $h$ be a smooth function  on $M$. Under the following conditions:
\begin{eqnarray}
   &&h(p)>0, 1-h(p)K^{2}(n,2,-2)>0\\
  && \label{4.36''} (A(n,a)+h(p))Scal_g(p)-\Delta h(p)<0 ,
\end{eqnarray} \text{ where } A(n,a) \text{ is defined by } \eqref{4.41'},\\
 there exists a non zero weak solution $u$ of \eqref{0.1} such that $0<J_h(u)<D^*$, where $D^*$ is defined by \eqref{0.4}.\\
 Moreover, under the following conditions:
 \begin{eqnarray}
   &&\label{4.35'} h(p)>0, 0<(1-h(p)K^{2}(n,2,-2))^{\frac{n-1}{2}}<\frac{1}{2} \\
   && \label{4.36'}(A(n,a)+h(p))Scal_g(p)-\Delta h(p)>0,\\
   &&\mu^{\frac{n}{2}}>nD^*, \text{where }\mu \text{ is defined by} \eqref{0.4}
\end{eqnarray}
there exists a non zero weak solution $u$ of \eqref{0.1} such that $D^*<J_h(u)<2D^*$.
\end{Theorem}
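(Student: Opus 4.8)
The plan is to build, by the standard mountain-pass/test-function technique, Palais--Smale sequences for $J_h$ whose limiting energy falls into the admissible windows of Corollary \ref{coro1}, and then invoke that corollary to upgrade convergence to an actual nonzero critical point. For the first part I would work with the constrained infimum $\mu$ and the Nehari-type mountain-pass level: consider the functional $J_h$ near $0$ and note that, since $0<h(p)<1/K(n,2,-2)^2$, the Hardy inequality \eqref{2.1} guarantees $J_h$ has the mountain-pass geometry, so the Ambrosetti--Rabinowitz theorem furnishes a Palais--Smale sequence $u_m$ at the min-max level $\beta=\inf_{\gamma}\max_t J_h(\gamma(t))$. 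The decisive point is the strict inequality $\beta<D^*$. To get it one inserts into $J_h$ the Hardy-type extremal $U_{h(p)}$ of \eqref{2.2}, suitably rescaled and cut off near the singular point $p$: set $u_\mu(x)=\mu^{(2-n)/2}\eta_\delta(\exp_p^{-1}(x))\,U_{h(p)}(\mu^{-1}\exp_p^{-1}(x))$ and expand $\sup_t J_h(t u_\mu)$ as $\mu\to 0$. The flat part of the expansion reproduces $D^*$ (via \eqref{0.4}), and the curvature correction term, computed using the local expansion of the metric $g$ in normal coordinates around $p$, contributes a term proportional to $\mu^2\big[(A(n,a)+h(p))\,\mathrm{Scal}_g(p)-\Delta h(p)\big]$ plus lower order; condition \eqref{4.36''}, together with $n>\tfrac 2a+2$ which ensures this term is not swamped by the remainder $o(\mu^2)$, makes this correction negative, hence $\sup_t J_h(tu_\mu)<D^*$ for small $\mu$. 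Since $\beta$ is positive by the mountain-pass geometry, Corollary \ref{coro1} applies and yields the desired $u\neq 0$ with $0<J_h(u)<D^*$.

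For the second part, with conditions \eqref{4.35'}, \eqref{4.36'} and $\mu^{n/2}>nD^*$, the goal is a Palais--Smale sequence at a level $\beta$ in the higher window $D^*<\beta<2D^*$. Here I would use a more elaborate min-max class: since by the first part there is no obstruction at levels below $D^*$ (and $\mu^{n/2}\ge nD^*$ blocks the ``one-solution plus one bubble'' decomposition from landing below $2D^*$), one sets up a two-parameter (or linking-type) min-max whose natural level sits near $D^*+$(a bubble contribution). Concretely, one combines a fixed nonzero ground state with a single rescaled-and-translated Hardy bubble $(R_m)^{(2-n)/2}\eta_r U_{h(p)}(R_m^{-1}\exp_p^{-1}(x))$ and estimates the energy of the resulting family; the interaction estimates are of Brezis--Nirenberg type. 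The sign condition \eqref{4.36'} (opposite to \eqref{4.36''}) is what now forces the max over the family to \emph{exceed} $D^*$ while staying strictly below $2D^*$ — this is where one again uses the curvature expansion around $p$, but with the correction term now positive. With $\beta\in(D^*,2D^*)$ secured and \eqref{4.35'}, \eqref{3.29} in force, the second part of Corollary \ref{coro1} gives a nonzero critical point $u$ with $D^*<J_h(u)<2D^*$.

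The main obstacle, in both parts, is the precise asymptotic expansion of $\sup_t J_h(tu_\mu)$ as $\mu\to 0$: one must track the numerator $\int_M(|\nabla u_\mu|^2-\tfrac{h}{\rho_p^2}u_\mu^2)\,dv_g$ and denominator $\int_M|u_\mu|^{2^*}dv_g$ to an order low enough to see the $\mu^2$-curvature term survive. This requires (i) replacing $dv_g$ by its normal-coordinate expansion $dv_g=(1-\tfrac16\mathrm{Ric}_{ij}x^ix^j+\cdots)dx$, (ii) Taylor-expanding $h(\exp_p(x))$ to second order so that $\Delta h(p)$ appears, (iii) expanding $\rho_p^{-2}=|x|^{-2}(1+O(|x|^2))$, and (iv) handling the cut-off error from $\eta_\delta$, which is $O(\mu^{n-2})$ and negligible precisely when $n>\tfrac 2a+2$ (so that the singular profile $|x|^{(a-1)(n/2-1)}$-type decay of $U_{h(p)}$ makes the tail integrals convergent at the right rate). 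The constant $A(n,a)$ in \eqref{4.41'} is exactly the coefficient produced by integrating the Ricci term against $U_{h(p)}^2$. Once this expansion is in hand, the rest — checking mountain-pass geometry, positivity of $\beta$, and applying Corollary \ref{coro1} — is routine.
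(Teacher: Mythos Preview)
For the first part your approach coincides with the paper's: the paper uses Ekeland's principle on the Nehari manifold $\NN$ rather than the Ambrosetti--Rabinowitz mountain pass, but the resulting level is the same (for $u\in\NN$ one has $J_h(u)=\max_{t>0}J_h(tu)$), and your test function $u_\mu$ is precisely the paper's $\phi_\varepsilon$; the expansion you outline is carried out in Lemma~\ref{lem4.3} and yields \eqref{4.45}.

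For the second part your plan is off track. You propose a linking construction that ``combines a fixed nonzero ground state'' with a Hardy bubble, but under the second set of hypotheses no ground state is available: the first part of the theorem does not apply since \eqref{4.36''} is replaced by its opposite \eqref{4.36'}, so there is nothing to link against. You also misassign the role of \eqref{4.36'}: it is not what forces a min-max level above $D^*$; the hypothesis $\mu^{n/2}>nD^*$ does that directly. The paper's argument is far simpler and re-uses the \emph{same} Nehari infimum $\beta=\inf_{\NN}J_h$ as in part one. For any $u\in\NN$ one has
\[
J_h(u)=\tfrac{1}{n}\int_M|u|^{2^*}dv_g\ \ge\ \tfrac{1}{n}\,\mu^{n/2}\ >\ D^*,
\]
hence $\beta>D^*$. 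For the upper bound, the expansion \eqref{4.45} gives $J_h(\Phi(\phi_\varepsilon))=D^*\bigl(1+O(\varepsilon^2)\bigr)<2D^*$ for $\varepsilon$ small; condition \eqref{4.36'} merely makes this compatible with $\beta>D^*$ (were \eqref{4.36''} to hold instead, one would get $J_h(\Phi(\phi_\varepsilon))<D^*$, contradicting $\mu^{n/2}>nD^*$). With $\beta\in(D^*,2D^*)$ secured, the second clause of Corollary~\ref{coro1} applies and finishes the proof. No interaction estimates or higher min-max classes are needed.
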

For the proof of the above theorem, we introduce the Nehari manifold for the functional $J_h$
\begin{equation*}
    \NN=\{u\in H^2_1(M)\setminus\{0\}, DJ_h(u).u=0\}.
\end{equation*}
Note that for each $u\in u\in H^2_1(M)\setminus\{0\}$, the function
\begin{equation*}
\Phi(u)=(\frac{\int_M(|\nabla u|^2-\frac{h}{r^2}u^2)dv_g}{\int_M|u|^{2^*}dv_g})^{\frac{n-2}{4}}u
\end{equation*}
 belongs to $\NN$ and $J_h(\Phi(u))=\max_{t>0}J(tu)$.\\
 Let $ 0\le\eta_\delta\le1$ be a cut-off function  on $M$ such that, $\eta_\delta(x)=1, x\in B(p,\delta), \eta_\delta(x)=0, x\in M\setminus\in B(p,2\delta)$ and $|\nabla\eta_\delta|\le C$, for some constant $C>0$.\\
 Put $\rho_p(x)=r$ and for $0<\delta<\frac{\delta_g}{2}$, consider  on $M$ the function
 \begin{equation*}
    \phi_\varepsilon(x)= C(n,a)
    \eta_\delta\left(\frac{\varepsilon^{a}r^{a-1}}{\varepsilon^{2a}+r^{2a}}\right)^{\frac{n}{2}-1},
\end{equation*}
where $C(n,a)= (a^2n(n-2))^{\frac{n-2}{4}}$ and  $a=\sqrt{1-h(p)K(n,2,-2)^2}$.\\
We begin by proving the following lemma:
\begin{Lemma}\label{lem4.3} There exists a constant $A(n,a)$ such that if
 \begin{enumerate}
   \item $n=dim(M)>2+\frac{2}{a},$ and
   \item $(A(n,a)+h(p))Scal_g(p)-6\frac{\Delta h(p)}{n}<0,$
 \end{enumerate}
 there holds
\begin{equation}\label{4.34}
 J_h(\Phi(\phi_\varepsilon)<D^*.
\end{equation}
\end{Lemma}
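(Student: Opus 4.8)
The plan is to reduce the inequality to a sharp asymptotic expansion, in powers of $\varepsilon$, of the Rayleigh quotient of $\phi_\varepsilon$, and to read off the sign of its first correction term. Since $\Phi(\phi_\varepsilon)$ is the Nehari projection of $\phi_\varepsilon$, one has
\[
J_h(\Phi(\phi_\varepsilon))=\frac1n\left(\frac{\int_M\bigl(|\nabla\phi_\varepsilon|^2-\tfrac{h}{\rho_p^2}\phi_\varepsilon^2\bigr)dv_g}{\bigl(\int_M|\phi_\varepsilon|^{2^*}dv_g\bigr)^{2/2^*}}\right)^{n/2},
\]
so it suffices to show that the quotient $Q_h(\phi_\varepsilon)$ inside the parenthesis stays strictly below $(nD^*)^{2/n}=\dfrac{(1-h(p)K(n,2,-2)^2)^{(n-1)/n}}{K(n,2)^2}$, which by \eqref{0.4} (with $\lambda=h(p)$) is exactly the Euclidean Hardy--Sobolev optimal quotient, attained by $U_{h(p)}$. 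A direct computation with the constants shows that $\phi_\varepsilon=\eta_\delta\,U_\varepsilon$ with $U_\varepsilon(x)=\varepsilon^{-(n-2)/2}U(x/\varepsilon)$ and $U=U_{h(p)}$ the extremal in \eqref{0.4}; hence the leading term of $Q_h(\phi_\varepsilon)$ is precisely $(nD^*)^{2/n}$ (in particular the quotient is well defined for $\varepsilon$ small), and everything hinges on proving that the first nonzero correction is strictly negative.

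To obtain the expansion I would work in $g$-normal coordinates centred at $p$, using $dv_g=\bigl(1-\tfrac16 \mathrm{Ric}_{ij}(p)x^ix^j+O(|x|^3)\bigr)dx$, $g^{ij}=\delta^{ij}+\tfrac13 R_{ikjl}(p)x^kx^l+O(|x|^3)$, and $h(x)=h(p)+\partial_i h(p)x^i+\tfrac12\partial_i\partial_j h(p)x^ix^j+o(|x|^2)$. Substituting into the three integrals $\int|\nabla_g\phi_\varepsilon|^2dv_g$, $\int\frac{h}{\rho_p^2}\phi_\varepsilon^2dv_g$, $\int|\phi_\varepsilon|^{2^*}dv_g$, rescaling $x=\varepsilon y$, and using that $U$ is radial (so every odd term, in particular the $\partial_ih(p)x^i$ contribution, integrates to zero), the term following the Euclidean main term is of order $\varepsilon^2$ and is a linear combination of $Scal_g(p)$, $h(p)Scal_g(p)$ and $\Delta h(p)$, the coefficients being built from the convergent integrals $\int_{\R^n}|x|^2|\nabla U|^2dx$, $\int_{\R^n}U^2dx$ and $\int_{\R^n}|x|^2U^{2^*}dx$. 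Dividing out the common positive factor, this $\varepsilon^2$-coefficient becomes proportional to $(A(n,a)+h(p))Scal_g(p)-\tfrac6n\Delta h(p)$ — a relation which is taken as the definition of $A(n,a)$. Thus $Q_h(\phi_\varepsilon)=(nD^*)^{2/n}-c(n,a)\varepsilon^2\bigl[(A(n,a)+h(p))Scal_g(p)-\tfrac6n\Delta h(p)\bigr]+o(\varepsilon^2)$ with $c(n,a)>0$, and hypothesis (2) makes the bracket negative, so $Q_h(\phi_\varepsilon)<(nD^*)^{2/n}$ for $\varepsilon$ small, giving \eqref{4.34}.

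The role of hypothesis (1), $n>2+\tfrac2a$, is twofold and is where the estimate is delicate. First, $\int_{\R^n}|x|^2|\nabla U|^2dx$ and $\int_{\R^n}U^2dx$ converge at infinity precisely when $a(n-2)>2$; for $n=2+\tfrac2a$ they diverge logarithmically and the correction would be of order $\varepsilon^2\log(1/\varepsilon)$, so the strict inequality is needed to keep a clean power expansion. Second, the errors coming from the cut-off annulus $\{|x|\sim\delta\}$ and from the $O(|x|^3)$ remainders in $g$ and in $h$ are controlled by $\int_{|y|>\delta/\varepsilon}|\nabla U|^2dy=O(\varepsilon^{a(n-2)})$ and similar quantities, which are $o(\varepsilon^2)$ exactly when $a(n-2)>2$. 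Carrying out this bookkeeping — tracking every $\varepsilon^2$ contribution from the volume element, the inverse metric, the Taylor expansion of $h$ and the denominator, and checking that they assemble into the stated combination while all remainders are genuinely $o(\varepsilon^2)$ — is the main (though essentially routine) obstacle; the reduction in the first paragraph is the soft part.
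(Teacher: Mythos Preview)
Your approach is exactly that of the paper: reduce via the Nehari projection to the Rayleigh quotient, expand each of the three integrals in geodesic normal coordinates at $p$ up to order $\varepsilon^2$, use radiality to kill odd contributions, and read off the sign of the $\varepsilon^2$-coefficient; the paper carries this out explicitly with the integrals $I^\alpha_\beta=\int_0^\infty r^\alpha(1+r^{2a})^{-\beta}dr$ and their recursion, arriving at
\[
J_h(\Phi(\phi_\varepsilon))=D^*\Bigl[1+B(n,a)\bigl((A(n,a)+h(p))Scal_g(p)-\tfrac{6}{n}\Delta h(p)\bigr)\varepsilon^2\Bigr]+o(\varepsilon^2)
\]
with $B(n,a)>0$ (positivity using exactly $a(n-2)>2$), and your identification of the role of hypothesis~(1) is correct on both counts.

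There is, however, a sign slip in your final display: you write $Q_h(\phi_\varepsilon)=(nD^*)^{2/n}-c(n,a)\varepsilon^2[\cdots]$ with $c(n,a)>0$ and then say ``hypothesis~(2) makes the bracket negative, so $Q_h(\phi_\varepsilon)<(nD^*)^{2/n}$''. With a minus sign in front and a negative bracket the correction is \emph{positive}, which would give the wrong inequality. The paper's computation yields a \emph{plus} sign (equivalently, your $c(n,a)$ should be negative, or the sign in front should be $+$); once this is fixed your argument and the paper's coincide.
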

 \begin{proof}
 Consider a geodisic  normal coordinate system around $p$. In this system, the function $G(r)= \frac{1}{w_{n-1}}\int_{S^{n-1}}\sqrt{|g(x)}d\sigma$ writes (see for example the book \cite{Hebey}, page 283.)
 \begin{equation*}
G(r)=1-\frac{1}{6}Scal_g(p)r^2+o(r^2)
 \end{equation*}
 Define for $2a\beta-1>\alpha>0$
 \begin{equation*}
   I^\alpha_{\beta}= \int_0^{\infty}\frac{r^{\alpha}}{(1+ r^{2a})^\beta}dr.
\end{equation*}
Then, one can easily have for $\alpha>2a-1$ and $2a\beta>\alpha+1$
\begin{eqnarray}
 \label{4.30} I^\alpha_{\beta}&=&\frac{\alpha-2a+1}{2a\beta-(\alpha+1)}I_{\beta}^{\alpha-2a}  \\
 \label{4.31} I^{\alpha-2a}_{\beta-1}&=& \frac{2a(\beta-1)}{2a\beta-(\alpha+1)}I_{\beta}^{\alpha-2a}
\end{eqnarray}
 We have
 \begin{eqnarray*}
 &&\int_M|\nabla\phi_\varepsilon|^2dv_g\\&=&
C(n,a)^2 (\frac{n-2}{2})^2w_{n-1}\varepsilon^{a(n-2)}[\int_0^{\delta}\frac{r^{a(n-2)-1}((a-1)\varepsilon^{2a}-(1+a)r^{2a})^2}
 {(\varepsilon^{2a}+r^{2a})^n}G(r)dr\\&+& C(n,a)^2 (\frac{n-2}{2})^2w_{n-1}\int_\delta^{2\delta}\frac{r^{a(n-2)-1}((a-1)\varepsilon^{2a}-(1+a)r^{2a})^2}
 {(\varepsilon^{2a}+r^{2a})^n}\eta_\delta G(r)dr\\&+&(n-2)w_{n-1}C(n,a)\varepsilon^{\frac{a(2-n)}{2}}\int_\delta^{2\delta}\frac{r^{\frac{a(n-2)+n-2}{2}}( (a-1)\varepsilon^{2a}-(1+a)r^{2a})}{(\varepsilon^{2a}+r^{2a})^{\frac{n}{2}}}(\nabla r.\nabla\eta_\delta)G(r)dr \\&+& w_{n-1}C(n,a)^2\int_\delta^{2\delta}\frac{r^{(a-1)(n-2)+n-1}}
 {(\varepsilon^{2a}+r^{2a})^{n-2}}|\nabla\eta_\delta|^2G(r)dr]
 \end{eqnarray*}
 The change of variable $r=\varepsilon t$ gives
  \begin{eqnarray*}
 &&\int_M|\nabla\phi_\varepsilon|^2dv_g\\&=&w_{n-1}[
 C(n,a)^2(\frac{n-2}{2})^2\int_0^{\frac{\delta}{\varepsilon}}\frac{t^{a(n-2)-1}((a-1)-(1+a)t^{2a})^2}
 {(1+t^{2a})^n}G(\varepsilon t)dt\\&+& C(n,a)^2 (\frac{n-2}{2})^2\int_{\frac{\delta}{\varepsilon}}^{\frac{2\delta}{\varepsilon}}\frac{t^{a(n-2)-1}((a-1)-(1+a)t^{2a})^2}
 {(1+t^{2a})^n}\eta_\delta G(\varepsilon t)dt\\&+&(n-2)C(n,a)\varepsilon^{\frac{n+2}{2}}  \int_{\frac{\delta}{\varepsilon}}^{\frac{2\delta}{\varepsilon}}\frac{t^{\frac{a(n-2)+n-2}{2}}( (a-1)-(1+a)t^{2a})}{(1+t^{2a})^{\frac{n}{2}}}(\nabla t.\nabla\eta_\delta)G(\varepsilon t)dt\\&+& \varepsilon^{2} C(n,a)^2 \int_{\frac{\delta}{\varepsilon}}^{\frac{2\delta}{\varepsilon}}\frac{t^{(a-1)(n-2)+n-1}}
 {(1+t^{2a})^{n-2}}|\nabla\eta_\delta|^2G(\varepsilon t)dt
 \end{eqnarray*}
  The function $G(r)$ is bounded in $B(p,2\delta)$, then
  \begin{eqnarray*}
    &&\int_{\frac{\delta}{\varepsilon}}^{\frac{2\delta}{\varepsilon}}\frac{t^{a(n-2)-1}((a-1)-(1+a)t^{2a})^2}
 {(1+t^{2a})^n}\eta_\delta G(\varepsilon t)dt\\&&\le C \int_{\frac{\delta}{\varepsilon}}^{\frac{2\delta}{\varepsilon}}\frac{t^{a(n-2)-1}((a-1)-(1+a)t^{2a})^2}
 {(1+t^{2a})^n}dt\to0_{\varepsilon\to0},
  \end{eqnarray*}
  together with
 \begin{eqnarray*}
  &&\varepsilon^{\frac{a(n-2)+n+2}{2}}|\int_{\frac{\delta}{\varepsilon}}^{\frac{2\delta}{\varepsilon}}\frac{r^{\frac{a(n-2)+n-2}{2}}( (a-1)-(1+a)t^{2a})}{(1+t^{2a})^{\frac{n}{2}}}(\nabla t.\nabla\eta)G(\varepsilon t)dt|\\&&\le C\varepsilon^{\frac{a(n-2)}{2}}
 \end{eqnarray*}
and
\begin{equation*}
\varepsilon^{2}|\int_{\frac{\delta}{\varepsilon}}^{\frac{2\delta}{\varepsilon}}\frac{t^{(a-1)(n-2)+n-1}}
 {(1+t^{2a})^{n-2}}|\nabla\eta|^2G(\varepsilon t)dt|\le C\varepsilon^{a(n-2)+1}.
\end{equation*}
Observe that for $n>2+\frac{2}{a}$,
\begin{equation*}
   \lim_{\varepsilon\to0}\int_{\frac{\delta}{\varepsilon}}^{\infty}\frac{t^{a(n-2)+1}((a-1)-(1+a)t^{2a})^2}
 {(1+t^{2a})^n}dt=0
\end{equation*}
and
\begin{equation*}
   \lim_{\varepsilon\to0}\int_{\frac{\delta}{\varepsilon}}^{\infty}\frac{t^{a(n-2)-1}((a-1)-(1+a)t^{2a})^2}
 {(1+t^{2a})^n}dt=0.
\end{equation*}
Hence, we get
\begin{eqnarray*}
   &&\int_M|\nabla\phi_\varepsilon|^2dv_g\\&=& C(n,a)^2(\frac{n-2}{2})^2w_{n-1}[\int_0^{\infty}\frac{t^{a(n-2)-1}((a-1)-(1+a)t^{2a})^2}
 {(1+t^{2a})^n}dt\\&-&\frac{1}{6}Scal_g(p)\varepsilon^2\int_0^{\infty}\frac{t^{a(n-2)+1}((a-1)-(1+a)t^{2a})^2}
 {(1+t^{2a})^n}dt+\alpha_1(\varepsilon)].
\end{eqnarray*}
with $\lim_{\varepsilon\to0}\alpha_1(\varepsilon)=0$.\\
Now, by using \eqref{4.30}, we get
\begin{eqnarray*}
&&\int_0^{\infty}\frac{t^{a(n-2)+1}((a-1)-(1+a)t^{2a})^2}
 {(1+t^{2a})^n}dt\\&=&(a-1)^2I_n^{a(n-2)+1}-2(a^2-1)I_n^{an+1}+(1+a)^2I_n^{a(n+2)+1}
 \\&=&[(a-1)^2+2(1-a^2)\frac{a(n-2)+2}{an-2}\\&+&(1+a)^2\frac{(an+2)(a(n-2)+2)}{(an-2)(a(n-2)-2)}]I_n^{a(n-2)+1}
\end{eqnarray*}
Finally, by observing  that
\begin{equation*}
C(n,a)^2(\frac{n-2}{2})^2w_{n-1}\int_0^{\infty}\frac{t^{a(n-2)+1}((a-1)-(1+a)t^{2a})^2}
 {(1+t^{2a})^n}dt=\int_{\R^n}|\nabla U|^2dx,
\end{equation*}
where
 \begin{equation*}
    U(x)=C(n,a)\left(\frac{|x|^{a-1}}{(1+|x|^{2a})}\right)^{\frac{n-2}{2}},x\in \R^n.
 \end{equation*}
 we get
\begin{eqnarray}\label{4.31}
&&\int_M|\nabla\phi_\varepsilon|^2dv_g\\ \nonumber &=&\int_{\R^n}|\nabla U|^2dx -Scal_g(p)C_1(n,a)
I_n^{a(n-2)+1}\varepsilon^2+o(\varepsilon^2)+\alpha_1(\varepsilon).
\end{eqnarray}
with
\begin{eqnarray*}
C_1(n,a)&=&\frac{1}{6}C(n,a)^2(\frac{n-2}{2})^2w_{n-1}[(a-1)^2+2(1-a)\frac{a(n-2)+2}{an-2}\\&+&(1+a)^2\frac{(an+2)(a(n-2)+2)}{(an-2)(a(n-2)-2)}].
\end{eqnarray*}
Similarly we develop the term $\int_M\frac{h(x)}{r^2}\phi_\varepsilon^2dv_g $. First, by choosing $\delta$ small we can write for $x\in B(p,\delta)$
\begin{equation*}
    h(x)=h(p)+(\nabla_ih)(p)x_i+(\nabla_{i,j}h)(p)x_ix_j+o(r^2)
\end{equation*}
 Using the fact that $\int_{S^{n-1}}x_id\sigma=0 $ and $\int_{S^{n-1}}x_ix_jd\sigma=0 $ for $i\neq j$, we obtain
\begin{eqnarray*}
&&\int_M\frac{h(x)}{r^2}\phi_\varepsilon^2dv_g\\&=&h(p)\int_{\R^N} \frac{U^2}{|x|^2}dx\\&-&w_{n-1}C(n,a)^2Scal_g(p)[\frac{Scal_g(p)h(p)}{6}-\frac{\Delta h(p)}{n}]I^{a(n-2)+1}_{n-2}\varepsilon^2+o(\varepsilon^2)+\alpha_2(\varepsilon),
\end{eqnarray*}
with $\lim_{\varepsilon\to0}\alpha_2(\varepsilon)=0$.\\
using , \eqref{4.31} we get
\begin{eqnarray}\label{4.32}
&&\nonumber \int_M\frac{h(x)}{r^2}\phi_\varepsilon^2dv_g\\ &=& h(p)\int_{\R^N} \frac{U^2}{|x|^2}dx-[Scal_g(p)h(p)-\frac{\Delta h(p)}{n}]C_2(n,a)I^{a(n-2)+1}_{n}\varepsilon^2\\ \nonumber&+&o(\varepsilon^2)+\alpha_2(\varepsilon)],
\end{eqnarray}
with
\begin{equation*}
C_2(n,a)=C(n,a)^2\frac{4a^2w_{n-1}(n-2)(n-1)}{(a(n-2)-2)(an-2)}.
\end{equation*}
As to the term $\int_M|\phi_\varepsilon|^{2^*}dv_g$, it develops as
\begin{equation*}
\int_M|\phi_\varepsilon|^{2^*}dv_g= \int_{\R^n}|U|^{2^*}dx-C(n,a)^{2^*}\frac{1}{6}w_{n-1}Scal_g(p)I_n^{an+1}\varepsilon^2+
o(\varepsilon^2)+\alpha_3(\varepsilon),
\end{equation*}
with $\lim_{\varepsilon\to0}\alpha_3(\varepsilon)=0$. Again, we use \eqref{4.30}, we obtain
\begin{eqnarray}\label{4.33}
&&\int_M|\phi_\varepsilon|^{2^*}dv_g\\ \nonumber&=& \int_{\R^n}|U|^{2^*}dx-Scal_g(p)C_3(n,a)I_n^{a(n-2)+1}\varepsilon^2+o(\varepsilon^2)
\\&+& \nonumber\alpha_3(\varepsilon),
\end{eqnarray}
with
\begin{equation*}
C_3(n,a)=C(n,a)^{2^*}\frac{w_{n-1}(a(n-2)+2)n}{6(an-2)}.
\end{equation*}
Using the fact that
\begin{equation*}
    \frac{\int_{\R^n}(|\nabla U|^2-h(p)\frac{U^2}{|x|^2})dx}{(\int_{\R^n}|U|^{2^*}dx)^{\frac{2}{2^*}}}=
    \frac{(1-h(p)K(n,2,-2)^2)^{\frac{n-1}{n}}}{K(n,2)^2}=(nD^*)^{\frac{2}{n}},
\end{equation*}
the expansions \eqref{4.31}, \eqref{4.32} and \eqref{4.33} give
\begin{eqnarray*}
    &&\frac{ (\int_M|\nabla\phi_\varepsilon|^2-\frac{h}{r^2}\phi_\varepsilon^2)dv_g}
    {(\int_M|\phi_\varepsilon|^{2^*}dv_g)^{\frac{2}{2^*}}}\\&=&(nD^*)^{\frac{2}{n}}(1+
    \frac{1}{\int_{\R^n}|U|^{2^*}dx}[Scal_g(p)(\frac{n-2}{n}C_3(n,a)-C_1(n,a))\\&+&(\frac{Scal_g(p)h(p)}{6}-\frac{\Delta h(p)}{n})C_2(n,a))]I_n^{a(n-2)+1}\varepsilon^2+o(\varepsilon^2)+\alpha(\varepsilon).
\end{eqnarray*}
with $\lim_{\varepsilon\to0}\alpha(\varepsilon)=0$.\\
Now, writing
\begin{equation*}
    nJ(\Phi(\phi_\varepsilon))=\left(\frac{ (\int_M|\nabla\phi_\varepsilon|^2-\frac{h}{r^2}\phi_\varepsilon^2)dv_g}
    {(\int_M|\phi_\varepsilon|^{2^*}dv_g)^{\frac{2}{2^*}}}\right)^{\frac{n}{2}}
\end{equation*}
 we obtain
 \begin{eqnarray*}
     &&J_h(\Phi(\phi_\varepsilon))\\&=&D^*(1+
    \frac{n}{2(\int_{\R^n}|U|^{2^*}dx)^{\frac{n}{2}}}[Scal_g(p)((\frac{n-2}{n}C_3(n,a)-C_1(n,a))\\&+&(\frac{Scal_g(p)h(p)}{6}-\frac{\Delta h(p)}{n})C_2(n,a))]I_n^{a(n-2)+1}\varepsilon^2+o(\varepsilon^2)+\alpha(\varepsilon).
 \end{eqnarray*}
Finally, take
\begin{equation}\label{4.41'}
 A(n,a)= \frac{6(\frac{n-2}{n}C_3(n,a)-C_1(n,a))}{C_2(n,a)},
\end{equation}
and
\begin{equation}\label{4.42}
 B(n,a)=\frac{n}{12C_2(n,a)(\int_{\R^n}|U|^{2^*}dx)^{\frac{n}{2}}}
 \end{equation}
 we get
 \begin{eqnarray}\label{4.45}
 &&\nonumber J_h(\Phi(\phi_\varepsilon)\\&=&D^*\left[1+
    B(n,a)\left((A(n,a)+h(p))Scal_g(p)-6\frac{\Delta h(p)}{n}\right)\varepsilon^2\right]\\ \nonumber&+&o(\varepsilon^2)+\alpha(\varepsilon)\nonumber.
 \end{eqnarray}
 Hence, if
 \begin{equation*}
(B(n,a)+h(p))Scal_g(p)-6\frac{\Delta h(p)}{n}<0,
 \end{equation*}
 we get \eqref{4.34}.
\end{proof}
\begin{proof}[Proof of theorem \ref{thm4.1}] Since the functional $J_h$ is bounded from below on the Nehari
manifold $\NN$, the variational principle of Ekland gives a Palais-Smale sequence  $u_m\in \NN$ of $J_h$ at the level $\beta=\inf_{u\in\NN,u\neq0}J_h(u)>0$. By definition of the manifold $\NN$,  $u_m$ is still a Palais-Smale sequence of $J_h$ on $H^2_1(M)$. Under condition \eqref{4.36''} of the theorem, lemma \ref{lem4.3} implies that $\beta\le J_h(\Phi(\phi_\varepsilon))<D^*$. Therefore, $u_m$ converges, by corollary \ref{coro1}, strongly in $H^2_1(M)$ to a nontrivial solution $u$ of \eqref{0.1} which then satisfies $ 0<J_h(u)<D^*$.\\
For the second part of the theorem, Since $\mu^{\frac{n}{2}}>nD^*$, then $\inf_{u\in\NN} J_h(u)>D^*$. On the other hand, the expansion \eqref{4.45} together with condition \eqref{4.35'} of the theorem give that $ \inf_{u\in\NN} J_h(u)<2D*$. Now, again by the Ekland variational principle there exists a Palais-Smale sequence at level $\beta=\inf_{u\in\NN} J_h(u)$, which by corollary \ref{coro1} converges, up to a subsequence to a weak solution with $D^*<J_h(u)<2D^*$.
\end{proof}

\end{document}